\newtheorem{theo}{Theorem}[section]
\newtheorem{thm}{Theorem}[section]
\newtheorem{prop}[theo]{Proposition}
\newtheorem{lemm}[theo]{Lemma}
\newtheorem{lemma}[theo]{Lemma}
\newtheorem{coro}[theo]{Corollary}
\newtheorem{question}[theo]{Question}
\theoremstyle{remark}
\newtheorem{rema}[theo]{Remark}
\newtheorem{rmk}[theo]{Remark}
\newcommand{\BC}{{\mathbb{C}}}
\newcommand{\BN}{{\mathbb{N}}}
\newcommand{\BQ}{{\mathbb{Q}}}
\newcommand{\BZ}{{\mathbb{Z}}}
\newcommand{\CO}{{\mathcal O}}
\newcommand{\CZ}{{\mathcal Z}}
\newcommand{\Fq}{{\mathfrak{q}}}
\newcommand{\fG}{{\mathfrak{G}}}
\newcommand{\ch}{\mathsf{ch}}
\newcommand\Kum{\mathrm{Kum}}
\newcommand{\id}{\mathrm{id}}
\newcommand\Tan{\mathrm{Tan}}
\newcommand{\pt}{{\mathsf{p}}}
\title{Hilbert schemes of K3 surfaces, generalized Kummer, and   cobordism classes of hyper-K\"ahler manifolds}
\author{Georg Oberdieck\footnote{
The author is funded by the Deutsche Forschungsgemeinschaft (DFG) - OB 512/1-1.},
Jieao Song,
Claire Voisin\footnote{The author is supported by the ERC Synergy Grant HyperK (Grant agreement No. 854361).}}
\date{}
\newfont{\gothic}{eufb10}
\begin{document}
\maketitle
\setcounter{section}{-1}

\begin{abstract} We  prove that the complex  cobordism class of any hyper-K\"{a}hler manifold of dimension $2n$ is a unique combination with rational coefficients of classes of products of punctual Hilbert schemes of $K3$ surfaces. We also prove  a similar  result using   the generalized Kummer varieties instead of punctual Hilbert schemes. As a key step, we establish a closed formula for the top  Chern character of their tangent bundles. 
 \end{abstract}
\section{Introduction}
The cobordism ring denoted ${\rm MU}^*({\rm pt})$ in \cite{totaro} and $\Omega^*$ in \cite{milnor}  has the following easy description (which is not the original Milnor definition), see \cite{stong}. In degree $i$, consider the free abelian group $\mathcal{Z}^i$ generated by
$i$-dimensional compact manifolds $M$  equipped with a stable complex structure $\alpha$, namely  a complex vector bundle  structure on the real bundle  $T_M\oplus \mathbb{R}^k$, where $\mathbb{R}^k$ is the trivial real vector bundle of rank $k$ on $M$. It contains the subgroup $\mathcal{Z}^i_b$ generated by boundaries, namely, for any real $i+1$-fold $N$ with boundary  equipped with a stable complex structure $\alpha$, as
$T_{N\mid \partial N}\cong T_{\partial N}\oplus \mathbb{R}$, the stable complex structure on $N$ induces a stable complex structure on the boundary $\partial N$, defining the boundary $\partial(N,\alpha)$. The group ${\rm MU}^*({\rm pt})$  is then defined as the quotient $\mathcal{Z}^i/\mathcal{Z}^i_b$. The  ring structure comes from the addition given by the disjoint union, and the product is given by  the geometric product.
It is proved in \cite{milnor} that ${\rm MU}^*({\rm pt})$ is trivial in odd degree $*$ and torsion free in even degrees $*$. Furthermore it is also known that the cobordism class of a pair $(M,\alpha)$, with ${\rm dim}\,M=2i$ is determined by the Chern numbers
$$\int_M P_I(c_l(M,\alpha)),$$
where we use the orientation of $M$ defined by $\alpha$ to compute the integral,  the Chern classes $c_l(M,\alpha)$ are those of the complex vector bundle $T_M\oplus \mathbb{R}^k$ equipped with the stable complex structure $\alpha$, and the $P_I$ generate the space of degree $2i$ weighted homogeneous polynomials in the $c_j$ where ${\rm deg}\, c_j=2j$. We will in fact work with the $\mathbb{Q}$-vector space ${\rm MU}^*({\rm pt})\otimes \mathbb{Q}$ that we will denote ${\rm MU}^*({\rm pt})$ for convenience.

If we consider hyper-K\"{a}hler manifolds of dimension $2n$, or more generally compact complex $2n$-folds $X$ having an everywhere nondegenerate $(2,0)$-form $\sigma_X$ (not necessarily closed, not necessarily holomorphic), the existence of the isomorphism of complex
vector bundles
$$T_{X}^{1,0}\cong (T_{X}^{1,0})^*$$
given by $\sigma_X$ implies that $c_l(X)=0$ for $l$ odd. It follows that
the cobordism classes of such complex manifolds  are 
 determined by the Chern numbers
$$\int_X P(c_{2l}(X)),$$
where we use the complex orientation of $X$  to compute the integrals,   and the polynomials $P$ generate the space of degree $4n$ weighted homogeneous polynomials in the $c_{2l}$, where ${\rm deg}\, c_{2l}=2l$. These polynomials are generated by monomials $M_I$  indexed by partitions $I$ of $n$, namely
to a partition $I$ given by the decomposition $n=n_1+\ldots+n_k$, one associates the monomial
$$M_I=c_{2n_1}\ldots c_{2n_k}.$$

Starting with a $K3$ surface $S$, we can construct in each even dimension $2n$ the following set of  symplectic holomorphic manifolds, also indexed by partitions $I$ of $n$, namely, to a partition $I$ as above one associates
$$S^{[I]}:=S^{[n_1]}\times \ldots\times S^{[n_k]}.$$
Similarly, using the generalized Kummer varieties ${\rm Kum}_i(A)$ associated  with a $2$-dimensional complex torus or abelian surface (see \cite{beauville}) instead of the Hilbert schemes of $K3$ surfaces, we associate   to a partition $I$ as above  the symplectic holomorphic $2n$-fold
$${\rm Kum}_I(A):={\rm Kum}_{n_1}(A)\times \ldots {\rm Kum}_{n_k}(A).$$

The main result of this paper can be formulated as follows.

\begin{theo}\label{theogenerators} (a) The complex cobordism class of any compact complex manifold $X$ with trivial odd Chern classes is a unique combination with rational coefficients of classes  $S^{[I]}$, where $S$ is a $K3$ surface.

(b) The same result holds if one replaces the varieties $S^{[I]}$ by the varieties ${\rm Kum}_I(A)$.
\end{theo}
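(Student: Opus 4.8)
The plan is to work entirely inside the $\mathbb{Q}$-algebra $\mathrm{MU}^*(\mathrm{pt})$ and to exhibit the classes in question as a basis of a suitable graded piece. Write $R\subseteq \mathrm{MU}^*(\mathrm{pt})$ for the $\mathbb{Q}$-subspace spanned by the cobordism classes of compact complex manifolds with trivial odd Chern classes. By the Whitney formula $c_j(X\times Y)=\sum_{a+b=j}c_a(X)c_b(Y)$, the vanishing of odd Chern classes is preserved under products, so $R$ is a graded subring; moreover $R$ is concentrated in even complex dimensions, since on a manifold of odd complex dimension no monomial $M_I=c_{2n_1}\cdots c_{2n_k}$ has the right degree to be a top class, whence all Chern numbers vanish. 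For $X$ of complex dimension $2n$ with trivial odd Chern classes, the only possibly nonzero Chern numbers are the $\int_X M_I$ with $I$ a partition of $n$, as recalled above, and these determine $[X]$ because over $\mathbb{Q}$ a cobordism class is determined by its Chern numbers. Thus the $p(n)$ numbers $(\int_X M_I)_{I\vdash n}$ give an injection $R_{2n}\hookrightarrow \mathbb{Q}^{p(n)}$, so $\dim_{\mathbb{Q}} R_{2n}\le p(n)$.

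Since each $S^{[I]}$ (resp.\ $\mathrm{Kum}_I(A)$) is holomorphic symplectic, its odd Chern classes vanish and its class lies in $R_{2n}$; there are exactly $p(n)$ of them. It therefore suffices to prove that these $p(n)$ classes are linearly independent: combined with the inequality above this forces $\dim_{\mathbb{Q}} R_{2n}=p(n)$ and exhibits them as a basis, whence every $[X]\in R_{2n}$ --- in particular the class of any complex manifold of dimension $2n$ with trivial odd Chern classes --- is a unique rational combination of them. This yields both existence and uniqueness in (a) and (b), once the corresponding independence is established.

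To prove independence I would pass to indecomposables. By multiplicativity of cobordism, $[S^{[I]}]=\prod_i [S^{[n_i]}]$, so the classes $[S^{[I]}]$ with $I\vdash n$ are exactly the monomials $\prod_i y_{n_i}$ with $\sum_i n_i=n$ in the classes $y_m:=[S^{[m]}]$. Since $\mathrm{MU}^*(\mathrm{pt})$ is a polynomial $\mathbb{Q}$-algebra (Milnor--Thom), it is enough to show that the $y_m$, $m\ge 1$, are algebraically independent, and for this I will use the classical fact that homogeneous elements of a graded polynomial algebra over a field whose images in the module of indecomposables are linearly independent are themselves algebraically independent. The relevant detector is the Newton characteristic number $s_{2n}[X]=\int_X s_{2n}(T_X)$, where $s_{2n}$ is the $2n$-th power sum of the Chern roots; since power sums are additive under direct sums, $s_{2n}$ vanishes on all decomposable cobordism classes and induces an isomorphism on the one-dimensional space of indecomposables in complex dimension $2n$. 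As the $y_m$ live in distinct complex dimensions $2m$, their indecomposable images are automatically linearly independent provided each is nonzero, i.e.\ provided $s_{2n}[S^{[n]}]\ne 0$ for every $n$, and likewise $s_{2n}[\mathrm{Kum}_n(A)]\ne 0$.

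Everything thus reduces to a single non-vanishing statement for each $n$. Using $s_{2n}=(2n)!\,\ch_{2n}$, this is exactly the assertion that the top Chern character number $\int_{S^{[n]}}\ch_{2n}(T_{S^{[n]}})$ (resp.\ $\int_{\mathrm{Kum}_n(A)}\ch_{2n}$) does not vanish. I expect this to be the main obstacle and the technical heart of the argument: one needs a closed formula for the top component of the Chern character of the tangent bundles of these Hilbert schemes and generalized Kummer varieties, which I would attack through the explicit description of the tangent bundles together with localization or Nakajima-operator and generating-series techniques, and then verify that the resulting expression is nonzero for all $n$. Granting this computation, the algebraic independence of the $y_m$ follows, the monomials $[S^{[I]}]$ (resp.\ $[\mathrm{Kum}_I(A)]$) are linearly independent, and the theorem follows as explained.
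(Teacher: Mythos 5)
Your reduction is sound and is in substance the same as the paper's: the paper's Proposition \ref{procriterion} is exactly the criterion that the products $X_{i_1}\times\cdots\times X_{i_k}$ form a basis of ${\rm MU}^*({\rm pt})_{\rm even}$ if and only if each Milnor genus $M(X_i)=\int_{X_i}\ch_{2i}(T_{X_i})$ is nonzero, and its proof (via Lemma \ref{lezeroprod} and an induction producing classes $Y_i$ killing all non-top Chern character monomials) is a hands-on version of the classical indecomposables argument with the Newton class $s_{2n}=(2n)!\,\ch_{2n}$ that you invoke. So up to the point where you write ``Granting this computation,'' everything checks out and matches Section \ref{sec2} of the paper.

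The genuine gap is that you do not prove the one statement everything hinges on, namely $\int_{S^{[n]}}\ch_{2n}(T_{S^{[n]}})\neq 0$ and $\int_{\Kum_n(A)}\ch_{2n}(T_{\Kum_n(A)})\neq 0$ for all $n$; you only name plausible tools. This is not a routine verification one can wave at: it is the technical heart of the paper (Theorems \ref{thm:hilb case} and \ref{thm:Kum case}), which establish the closed formulas $(-1)^n\frac{c_2(S)}{24}\frac{(2n+2)!}{n!^4(2n-1)}$ and $(-1)^n\frac{(2n+2)!}{n!^4}$. The paper's route is to express multiplication by $\ch_k(T_{S^{[n]}})$ in terms of the operators $\fG_d(\gamma)$, expand these in the Nakajima basis via the Li--Qin--Wang formula (valid for $c_1(S)=0$), and reduce the integral to binomial identities; the Kummer case additionally requires computing $[\Kum_n(A)]$ in the Nakajima basis of $A^{[n+1]}$ and using the normal bundle sequence for $\Kum_n(A)\subset A^{[n+1]}$. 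Without carrying out such a computation (or an equivalent localization argument) and, crucially, verifying that the answer is nonzero for every $n$, the theorem is not proved. A secondary, minor point: the nonvanishing must be established for \emph{both} families independently to get both parts (a) and (b); the two computations are genuinely different, as the Kummer case cannot be deduced from the Hilbert scheme case.
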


 In fact, the theorem that we will prove is even  more general, namely our results apply as well  to any compact complex manifold $X$ or complex cobordism class whose Chern numbers $\int_XM_I(c_1,\ldots, c_n)$, for any monomial $M_I$ involving nontrivially  an odd Chern class, are zero. For example, any complex fourfold $X$ with trivial first Chern class satisfies this property, while it can have $c_3(X)\not=0$. Similarly, complex $n$-folds with no nonzero odd degree  Chern classes in degree $\leq \frac{n}{2}$ satisfy this property. 
 More generally,  the rational subalgebra  $MU^*({\rm pt})_{\rm even}$ of $MU^*({\rm pt})$ consisting of cobordism classes with ``trivial  odd Chern numbers" in the  above sense is a free polynomial algebra over $\mathbb{Q}$ with one generator in each even dimension, and Theorem \ref{theogenerators} says that the cobordism classes of punctual Hilbert schemes of $K3$ surfaces, or of  the generalized Kummer varieties form a system of generators of this algebra.

 \begin{rema}\label{remarkpourneufpoints} {\rm It is known by \cite{EGL} that the cobordism class of  $S^{[I]}$ for a compact complex surface $S$ depends only on the Chern numbers $c_2(S),\,c_1(S)^2$. Hence we can replace in Theorem \ref{theogenerators} the $K3$ surface $S$ by any surface $S'$ with $c_1(S')^2=0$, $c_2(S')\not=0$, for example we can take for $S'$ the blow-up of $\mathbb{P}^2$ in $9$ points. }
 \end{rema}
 We will give a quick proof of Theorem \ref{theogenerators} (a)  in low dimension in Section \ref{secsmalldim}. In higher dimension, we will follow the following strategy, already  used by topologists.
The Milnor genus of a complex or almost complex manifold of complex dimension $m$ is defined as
\begin{eqnarray}
\label{eqmilnorgenus} M(X)=\int_X{\rm ch}_m(T_X),
\end{eqnarray}
where ${\rm ch}(T_X)=\sum_i{\rm ch}_i(T_X)$ is the Chern character of $X$ (see \cite{hirzebruch}).
As is classical in complex  cobordism theory (see \cite{milnor}, \cite{johnston}) and  will be recalled  in Section \ref{sec2}, Theorem \ref{theogenerators} is equivalent to  the following result concerning the Milnor genus  of $K3^{[n]}$ and ${\rm Kum}_n(A)$.
\begin{theo}\label{theononvanishing} (a) The Milnor genus $M(S^{[n]})$ is nonzero for all $n$.

(b) The Milnor genus $M({\rm Kum}_n(A))$ is nonzero for all $n$.
\end{theo}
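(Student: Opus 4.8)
The plan is to deduce Theorem \ref{theononvanishing} from an explicit generating series for the Milnor genus. Recall that if $x_1,\dots,x_{2n}$ are the Chern roots of $T_X$ then $\mathrm{ch}_{2n}(T_X)=\frac{1}{(2n)!}\sum_i x_i^{2n}$, so that $M(X)=\frac{1}{(2n)!}\int_X s_{2n}(T_X)$, where $s_{2n}$ is the $2n$-th Newton power sum. Since $\mathrm{ch}_{2n}(T_X)$ lives in top degree, a closed formula for the top Chern character is literally a closed formula for $M(X)$, and the whole content of the theorem is the non-vanishing of the resulting numbers. I would therefore first produce such a closed formula, and only then address non-vanishing, which I expect to be the genuine difficulty.

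For part (a), the integrals $\int_{S^{[n]}}\mathrm{ch}_{2n}(T_{S^{[n]}})$ are tautological integrals on Hilbert schemes, so by the universality of Ellingsrud--G\"ottsche--Lehn \cite{EGL} together with the Nakajima calculus they are governed by a universal generating series depending only on the two Chern numbers $c_1(S)^2$ and $c_2(S)$. For a $K3$ surface $c_1(S)^2=0$ and $c_2(S)=e(S)=24$, so the entire sequence $(M(S^{[n]}))_n$ is determined, and I would compute the associated series $\sum_{n\ge 1}M(S^{[n]})\,q^n$ in closed form. The subtlety here, compared with a naive application of \cite{EGL}, is that the degree of the relevant Chern-character component grows with $n$; I would handle this by working with the full Chern character of the tangent bundle of the universal subscheme and extracting its top-degree part, which is exactly the point of the closed-formula step.

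For part (b) the surface invariants degenerate, since an abelian surface $A$ has $c_1(A)^2=c_2(A)=0$, so Hilbert schemes of $A$ carry no information through \cite{EGL} directly. Instead I would exploit the isotrivial fibration $s\colon A^{[n+1]}\to A$ obtained by composing the Hilbert--Chow morphism with the group law of $A$, whose fibre over $0\in A$ is $\mathrm{Kum}_n(A)$. Since $s$ is a smooth fibre bundle with base the abelian surface $A$, the normal bundle of a fibre is trivial and $\mathrm{ch}(T_{A^{[n+1]}})|_{\mathrm{Kum}_n(A)}=\mathrm{ch}(T_{\mathrm{Kum}_n(A)})+2$; extracting the degree-$2n$ part and integrating over the fibre yields
\begin{equation*}
M(\mathrm{Kum}_n(A))=\int_{A^{[n+1]}}\mathrm{ch}_{2n}\!\big(T_{A^{[n+1]}}\big)\cdot s^*[\mathrm{pt}].
\end{equation*}
This turns the Kummer computation into a tautological integral on $A^{[n+1]}$ against the fibre class $s^*[\mathrm{pt}]$, again accessible through the generating-function machinery for Hilbert schemes of surfaces, now with the fibre class providing the non-trivial input.

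The main obstacle is the final non-vanishing statement. A naive positivity argument is hopeless: already $M(S^{[1]})=-24$ and $M(S^{[2]})=15$ have opposite signs, so the coefficients of the generating series are not sign-definite. My plan is instead to identify the closed-form series with an explicit (quasi-)modular function and to read off non-vanishing of all its coefficients from its analytic structure --- for instance from a product expansion, from a pole at a cusp forcing a controlled asymptotic growth of the coefficients, or from an exact hypergeometric-type expression that is manifestly nonzero --- after checking the finitely many small values directly. Pinning down this closed form precisely, and proving that none of its coefficients vanishes, is where I expect essentially all of the work to lie.
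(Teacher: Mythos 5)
Your reduction steps are essentially the ones the paper uses: part (a) is attacked as a tautological integral on $S^{[n]}$ via the Nakajima calculus (the paper uses the Li--Qin--Wang expansion of the tautological multiplication operators, valid precisely because $c_1(S)=0$ kills most terms), and for part (b) you correctly pass to $A^{[n+1]}$, use $[\mathrm{Kum}_n(A)]=\sigma^*(\mathrm{pt})$ and the triviality of $\sigma^*(T_A)$ on the fibre to write $M(\mathrm{Kum}_n(A))=\int_{A^{[n+1]}}\mathrm{ch}_{2n}(T_{A^{[n+1]}})\cdot\sigma^*[\mathrm{pt}]$, exactly as in the paper's proof of Theorem~\ref{thm:Kum case}. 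One caveat: your phrasing suggests computing the full two-variable universal series in $c_1^2$ and $c_2$; the paper only ever needs (and only obtains) the $c_2$-coefficient, because the $c_1=0$ hypothesis is what makes the Li--Qin--Wang formula tractable --- the $c_1^2$-coefficients $\alpha_n$ are in fact left as an open question in the paper.

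The genuine gap is that the entire computational core is deferred, and your proposed route to non-vanishing would not be the way to close it. The paper's answer is not a quasi-modular object whose coefficients must be controlled analytically; it is the elementary closed expression $\int_{S^{[n]}}\mathrm{ch}_{2n}=(-1)^n\frac{c_2(S)}{24}\frac{(2n+2)!}{n!^4(2n-1)}$ (and $(-1)^n\frac{(2n+2)!}{n!^4}$ for $\mathrm{Kum}_n(A)$), obtained term by term for each $n$ by commuting Nakajima operators and evaluating two binomial-type sums (Lemma~\ref{lemma} and Proposition~\ref{prop_combinatorics}); non-vanishing is then immediate. Your remark that $M(S^{[1]})=-24$ and $M(S^{[2]})=15$ show the coefficients are ``not sign-definite'' misses that the sign is exactly $(-1)^n$, i.e.\ $(-1)^nM(S^{[n]})>0$ throughout --- this is the form of positivity you should be aiming to prove directly (compare Proposition~\ref{prop:positivity}), rather than hoping to extract non-vanishing from a product expansion or cusp asymptotics of a generating function you have not identified. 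As written, the proposal establishes the correct framework but proves neither the closed formula nor the non-vanishing, which is where all the content of the theorem lies.
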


Theorem \ref{theononvanishing} will be proved in Section \ref{secproof}, where an explicit formula for $M(S^{[n]})$ and $M({\rm Kum}_n(A))$ will be established (see Theorems \ref{thm:hilb case} and \ref{thm:Kum case}). In  Section \ref{sec2}, which is mostly  introductory, we will explain the equivalence between Theorems \ref{theogenerators} and \ref{theononvanishing}. In the last section of the paper, we will present a few natural questions left open by our results.

\vspace{0.5cm}

\noindent
{\bf Thanks.} {\it  We thank Olivier Debarre for interesting discussions that led us to this collaboration.}
\section{Theorem \ref{theogenerators}  in small dimension\label{secsmalldim}}
In complex dimensions $2n=2,\,4$ and $6$, the ring ${\rm MU}^{4n}({\rm pt})$ is very simple. Indeed, for $n=2$, the only class to integrate  is $c_2$. In dimension $4$, we get only $c_4$ and $c_2^2$. Finally, in dimension $3$, we get only
$c_2^3,\,c_2c_4,\,c_6$. In all three cases, the space has dimension $n$, which is not true anymore in higher dimensions (for example, in dimension $8$,  there is an extra monomial  $c_4^2$). In this case, there are natural Chern numbers on
$S^{[k]}$ that are enough to test the independence of the classes $S^{[k_i]}$ in ${\rm MU}^*({\rm pt})_{\rm even}$, namely the  $n$  numbers
$$\chi_k(X):=\chi(X,\Omega_X^k),$$
for $k\leq n$, $2n={\rm dim}\,X$.
We observe that, by Serre duality,   the other holomorphic Euler-Poincar\'{e} characteristics $\chi(X,\Omega_X^k)$ for $k>n$ do not bring further information.
By the Hirzebruch-Riemann-Roch formula, $\chi_k(X)$ is a polynomial of degree $2n$ in the Chern classes of $X$.
 There is nothing to say in dimension $2$. In dimensions $4$ and $6$, in order to prove Theorem \ref{theogenerators}, or equivalently Theorem \ref{theononvanishing}  by Corollary \ref{coroequiv},   it suffices to prove the following, for $S$ a $K3$ surface:

 \begin{prop} \label{propetite} (1) (${\rm dim}\,4$) The matrix $\begin{pmatrix}
\chi(\Omega_{S^{[2]}}) & \chi(\Omega_{S\times S}) \\
\chi(\Omega^2_{S^{[2]}}) & \chi(\Omega^2_{S\times S})
\end{pmatrix} $ has nonzero determinant.

(2) (${\rm dim}\,6$) The matrix
$$\begin{pmatrix}
\chi(\Omega_{S^{[3]}}) & \chi(\Omega_{S^{[2]}\times S})&\chi(\Omega_{S^{3}}) \\
\chi(\Omega^2_{S^{[3]}}) & \chi(\Omega^2_{S^{[2]}\times S})&\chi(\Omega^2_{S^{3}}) \\
\chi(\Omega^3_{S^{[3]}}) & \chi(\Omega^3_{S^{[2]}\times S})&\chi(\Omega^3_{S^{3}})
\end{pmatrix} $$ has nonzero determinant.
 \end{prop}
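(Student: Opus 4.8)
The plan is to read every entry of both matrices as a special value of the Hirzebruch \(\chi_y\)-genus
\[
\chi_y(X)=\sum_{p\ge 0}\chi(X,\Omega_X^p)\,y^p,
\]
and then reduce the statement to a finite bookkeeping problem followed by two small determinant evaluations. Two structural facts do most of the work. First, \(\chi_y\) is multiplicative, \(\chi_y(X\times Y)=\chi_y(X)\,\chi_y(Y)\), so the columns attached to the product manifolds \(S\times S\), \(S^{[2]}\times S\) and \(S^3\) are determined by their factors. Second, for a \(K3\) surface the Hodge diamond gives \(\chi(\mathcal O_S)=2\), \(\chi(\Omega_S^1)=-20\), \(\chi(\Omega_S^2)=2\), i.e. \(\chi_y(S)=2-20y+2y^2\). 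Multiplicativity then yields, for instance, \(\chi_y(S\times S)=4-80y+408y^2-80y^3+4y^4\), and likewise the full polynomials for \(S^{[2]}\times S\) and \(S^3\) once the Hilbert-scheme factors are known.

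The remaining input is \(\chi_y(S^{[2]})\) and \(\chi_y(S^{[3]})\). I would obtain these from Göttsche's product formula for the Hodge numbers of Hilbert schemes of points, specialized to the \(K3\) Hodge data \((h^{0,0},h^{2,0},h^{1,1},h^{2,2})=(1,1,20,1)\); setting the variable that tracks \(q\) equal to \(-1\) (and keeping the variable tracking \(p\) as the genus variable) collapses the two-variable series to the \(\chi_y\)-genus. This produces
\[
\chi_y(S^{[2]})=3-42y+234y^2-42y^3+3y^4,
\]
\[
\chi_y(S^{[3]})=4-64y+508y^2-2048y^3+508y^4-64y^5+4y^6,
\]
both palindromic as forced by Serre duality and consistent with \(\chi(\mathcal O_{S^{[n]}})=n+1\). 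As a cross-check one can read \(\chi_y(S^{[2]})\) directly off the known Hodge diamond of \(K3^{[2]}\), and verify \(\chi_y(S^{[3]})\) against the Betti numbers \(b_4(K3^{[3]})=276\), \(b_6(K3^{[3]})=2600\) (equivalently \(e(K3^{[3]})=3200\)).

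Finally I would assemble the matrices by extracting the coefficients of \(y\), \(y^2\) (and \(y^3\)) from the relevant polynomials. In dimension \(4\) this gives \(\begin{pmatrix}-42 & -80\\ 234 & 408\end{pmatrix}\), of determinant \(1584\neq 0\). In dimension \(6\), extracting the \(y,y^2,y^3\) coefficients of \(\chi_y(S^{[3]})\), \(\chi_y(S^{[2]}\times S)\) and \(\chi_y(S^3)\) yields
\[
\begin{pmatrix}-64 & -144 & -240\\ 508 & 1314 & 2424\\ -2048 & -4848 & -8480\end{pmatrix},
\]
whose determinant is \(783360\neq 0\) (most safely computed by pulling out the common factors \(16,2,16\) from the rows, leaving a \(3\times 3\) integer determinant equal to \(1530\)). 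Since each \(\chi(X,\Omega_X^k)\) is a Chern number by Hirzebruch–Riemann–Roch, the non-vanishing of these determinants is exactly the independence needed in Corollary \ref{coroequiv}. The computation is entirely mechanical; the only real obstacle is producing \(\chi_y(S^{[3]})\) correctly, which is the sole plausible source of error and is pinned down by Göttsche's formula together with the palindromy and \(\chi(\mathcal O_{S^{[3]}})=4\) consistency checks.
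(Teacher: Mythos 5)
Your computation is correct, but you take a genuinely different route from the paper. The paper first invokes Remark \ref{remarkpourneufpoints} (via [EGL]) to replace the $K3$ surface by $\Sigma$, the blow-up of $\mathbb{P}^2$ in $9$ points; since all cohomology of $\Sigma^{[n]}$ and its products is of type $(p,p)$, each entry becomes $\chi(X,\Omega_X^k)=(-1)^kb_{2k}(X)$, and only the \emph{Betti} numbers of $\Sigma^{[2]}$, $\Sigma^{[3]}$ (from G\"ottsche's formula) plus K\"unneth are needed; the resulting matrices $\left(\begin{smallmatrix}-11&-20\\66&102\end{smallmatrix}\right)$ and $\left(\begin{smallmatrix}-11&-21&-30\\77&177&303\\-342&-682&-1060\end{smallmatrix}\right)$ have much smaller entries. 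You instead work directly with the $K3$ surface, which forces you to use the full Hodge-theoretic refinement (G\"ottsche--Soergel) to get $\chi_y(S^{[2]})$ and $\chi_y(S^{[3]})$, and then exploit multiplicativity of $\chi_y$. I checked your key input $\chi_y(S^{[3]})=4-64y+508y^2-2048y^3+\cdots$ against the G\"ottsche--Soergel formula ($h^{1,1}=21$, $h^{1,3}=22$, $h^{1,5}=21$, $h^{2,2}=h^{2,4}=253$, $h^{3,3}=2004$) and it is correct, as are both determinants ($1584$ and $783360$). The trade-off: the paper's reduction buys smaller numbers and avoids Hodge numbers entirely, at the cost of invoking [EGL]; your argument is self-contained over the actual $K3$ but leans on the harder generating function.

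One genuine (though non-fatal) error in your cross-checks: you assert $b_4(K3^{[3]})=276$ and $b_6(K3^{[3]})=2600$. The correct values are $b_4=299$ and $b_6=2554$ (indeed $2(1+23+299)+2554=3200$). Your wrong values happen to also sum to Euler characteristic $3200$, so the only check they actually feed into still passes by accident; since the coefficients of $\chi_y(S^{[3]})$ that you use in the matrix are nevertheless correct, the proof stands, but the stated Betti numbers should be fixed.
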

 \begin{proof} It is equivalent by Remark \ref{remarkpourneufpoints}, and  in fact easier to prove the same result for the surface $\Sigma$ obtained as the blow-up of  $\mathbb{P}^2$  in $9$ points. Indeed, in this case, the whole cohomology of the Hilbert scheme is of type $(p,p)$ and similarly for their products.  Thus we have
 $\chi(X,\Omega_X^k)=(-1)^kb_{2k}(X)$ for these varieties. The Betti numbers of $\Sigma^{[2]}$ and $\Sigma^{[3]}$ are computed by \cite{decami} or \cite{gottsche}. One has
 $$ b_2(\Sigma)=10,\,b_2(\Sigma^{[2]})=11  ,\,\,b_4(\Sigma^{[2]})= 66, $$
 $$b_2(\Sigma^{[3]})= 11 ,\,\,b_4(\Sigma^{[3]})= 77  ,\,\,b_6(\Sigma^{[3]})=342.$$
 By K\"{u}nneth decomposition, our matrices are thus $\begin{pmatrix}
-11 & -20 \\
66 & 102
\end{pmatrix} $ in case (1), and this matrix  has nonzero determinant and

$$\begin{pmatrix}
-11 & -21&-30 \\
77 & 177&303 \\
-342 & -682&-1060
\end{pmatrix} $$
in case (2), and this matrix  has nonzero determinant.
\end{proof}

\section{Reduction to Theorem \ref{theononvanishing}\label{sec2}}
The Chern character ${\rm ch}(E)$ of a complex vector bundle of rank $r$ on a topological space $X$ is defined
as
$${\rm ch}(E)=\sum_{i=1}^r{\rm exp}\,\lambda_i\in H^{2*}(X,\mathbb{Q}),$$
where the $\lambda_i$ are the formal roots of the Chern polynomial of $E$ (see \cite{hirzebruch}).
Its main properties are
\begin{eqnarray}\label{eqadd} {\rm ch}(E\oplus F)={\rm ch}(E)+{\rm ch}(F)
\end{eqnarray}
and, when $X$ is a manifold of real  dimension $k$,
\begin{eqnarray}\label{eqvanch}{\rm ch}_i(E)=0\,\,{\rm for}\,\,2i>k.
\end{eqnarray}
For a complex manifold $X$ we will use the notation  ${\rm ch}(X)={\rm ch}(T_X)$.
Let $X$ be a compact complex manifold of dimension $n$ which is a product
$$X\cong Y\times W$$
of complex manifolds of respective dimensions $n_Y,\,n_W<n$. Then,
as $T_X={\rm pr}_1^*T_Y\oplus {\rm pr}_2^* T_W$, where ${\rm pr}_i$ denotes the projection on the $i$-th factor, we get by (\ref{eqadd}) and (\ref{eqvanch})
\begin{eqnarray}\label{eqsumpourlesch}{\rm ch}_i(X)={\rm pr}_1^*{\rm ch}_i(Y) + {\rm pr}_2^*{\rm ch}_i(W),\end{eqnarray}
hence
$${\rm ch}_i(X)=0\,\,{\rm for}\,\, i>{\rm max}(n_Y,\,n_W).$$

The Milnor genus $M(X)$ defined in (\ref{eqmilnorgenus}) thus satisfies the following property
\begin{lemm} \label{lezeroprod} We have  $M(X)=0$ if $X$ is a product of two complex manifolds of  dimension smaller than $n$.
\end{lemm}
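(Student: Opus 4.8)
The plan is to read the statement off directly from the additivity identity (\ref{eqsumpourlesch}) specialized to the top degree. Writing $X \cong Y \times W$ with $n_Y := {\rm dim}\,Y$ and $n_W := {\rm dim}\,W$, so that $n = n_Y + n_W$ and both factors satisfy $n_Y, n_W < n$, I would take the degree $i = n$ component of (\ref{eqsumpourlesch}), which gives ${\rm ch}_n(X) = {\rm pr}_1^* {\rm ch}_n(Y) + {\rm pr}_2^* {\rm ch}_n(W)$ in $H^{2n}(X,\mathbb{Q})$.

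Next I would invoke the dimension vanishing (\ref{eqvanch}). Since $Y$ has real dimension $2n_Y$ and $n > n_Y$, the component ${\rm ch}_n(T_Y)$ vanishes by (\ref{eqvanch}), and likewise ${\rm ch}_n(T_W) = 0$ on $W$; this is exactly the consequence ``${\rm ch}_i(X) = 0$ for $i > \max(n_Y, n_W)$'' recorded just after (\ref{eqsumpourlesch}), applied with $i = n > \max(n_Y, n_W)$. Hence both pullbacks vanish, so ${\rm ch}_n(X) = 0$, and integrating over $X$ yields $M(X) = \int_X {\rm ch}_n(X) = 0$, as claimed.

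There is essentially no obstacle here: the lemma is an immediate corollary of the additivity and vanishing properties of the Chern character recalled above, the only point to check being the harmless numerical inequality $n > \max(n_Y, n_W)$, which holds because each factor has strictly positive dimension. The real significance of the statement is conceptual rather than technical, namely that products of lower-dimensional manifolds are invisible to the Milnor genus; this is precisely what will make $M(\,\cdot\,)$ a usable invariant for detecting indecomposability of cobordism classes in the reduction to Theorem \ref{theononvanishing}.
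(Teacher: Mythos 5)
Your proof is correct and is essentially the paper's own argument: the authors state the lemma as an immediate consequence of the additivity formula (\ref{eqsumpourlesch}) and the dimension vanishing (\ref{eqvanch}), exactly the two ingredients you combine. Nothing further is needed.
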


The formal properties above give the following criterion
\begin{prop}  \label{procriterion} Let $X_1,\ldots,\,X_i,\ldots,\,X_n$ be compact complex manifolds of dimension $2i$ with vanishing odd Chern classes : $c_{2l+1}(X_i)=0$. Then,  $\lambda_i := M(X_i)$ is nonzero for any $i$, if and only if  any complex cobordism class  of even dimension $\leq 2n$ with vanishing ``odd Chern numbers'' can be written uniquely as a rational combination of products
$$X_I:=X_{i_1}\times\ldots\times X_{i_k},\,\,\sum_l i_l\leq n.$$
\end{prop}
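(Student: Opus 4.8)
The plan is to deduce the statement from the known polynomial structure of the even cobordism ring together with Lemma~\ref{lezeroprod}, reducing everything to a single nonvanishing fact about the Milnor genus. Write $R:={\rm MU}^*({\rm pt})_{\rm even}$, which by the discussion in the introduction is a free polynomial $\mathbb{Q}$-algebra $\mathbb{Q}[y_1,y_2,\ldots]$ with one generator $y_k$ in each even complex dimension $2k$. Its graded piece $R_{2n}$ in complex dimension $2n$ then has as a basis the monomials $y_{k_1}\cdots y_{k_r}$ with $k_1+\ldots+k_r=n$, so that $\dim_{\mathbb{Q}} R_{2n}$ equals the number $p(n)$ of partitions of $n$. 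The products $X_I=X_{i_1}\times\ldots\times X_{i_k}$ with $\sum_l i_l=n$ are likewise indexed by the partitions of $n$, so the spanning set and the basis have the same (finite) cardinality in each degree. Consequently the assertion that every class of even dimension $\le 2n$ with vanishing odd Chern numbers is a \emph{unique} rational combination of the $X_I$ is equivalent to the assertion that the graded $\mathbb{Q}$-algebra homomorphism
$$\Phi:\mathbb{Q}[z_1,z_2,\ldots]\longrightarrow R,\qquad z_k\longmapsto [X_k],$$
is an isomorphism in degrees $\le 2n$, since with equal finite Poincar\'e series injectivity and surjectivity of $\Phi$ coincide respectively with linear independence and spanning of the $X_I$.

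Next I would reduce this isomorphism question to a statement about indecomposables. For connected graded polynomial algebras, a graded homomorphism between two such algebras of the same Poincar\'e series is an isomorphism if and only if it induces an isomorphism on the graded space of indecomposables $Q_\bullet:=R/(R_{>0}\cdot R_{>0})$. Since $R$ has exactly one polynomial generator in each even dimension, each degree-$2k$ piece $Q_{2k}$ is one-dimensional, and $\Phi$ is an isomorphism precisely when the image $[X_k]$ is nonzero in $Q_{2k}$ for every $k$. This is the linear-algebra core of the proposition, and it is entirely formal given the polynomial structure of $R$.

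The Milnor genus enters through Lemma~\ref{lezeroprod}: since $M$ is additive and vanishes on products of positive-dimensional manifolds, it kills all decomposable classes and therefore descends to a linear functional $\overline{M}:Q_{2k}\to\mathbb{Q}$. Granting that $\overline{M}$ is an \emph{isomorphism} for each $k$, the one-dimensionality of $Q_{2k}$ gives $[X_k]\ne 0$ in $Q_{2k}$ if and only if $M(X_k)=\overline{M}([X_k])\ne 0$; combined with the previous paragraph this yields exactly the equivalence claimed in the proposition. I note that the forward implication, which is the one actually used in our application (Theorem~\ref{theononvanishing} $\Rightarrow$ Theorem~\ref{theogenerators}), does not require $\overline{M}$ to be an isomorphism: if one is \emph{given} $M(X_k)\ne 0$ for all $k$, then this together with the vanishing of $M$ on decomposables already forces $[X_k]\ne 0$ in $Q_{2k}$, whence the $X_I$ form a basis.

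The hard part, and the only input that is not pure bookkeeping, is the claim that $\overline{M}:Q_{2k}\to\mathbb{Q}$ is nonzero, equivalently that the Milnor genus does not vanish identically on the degree-$2k$ piece of $R$, so that it detects the polynomial generator $y_k$. This is where one must invoke the classical theory: writing $M=\tfrac{1}{(2k)!}\,s_{2k}$ with $s_{2k}$ the characteristic number of the $2k$-th Newton polynomial, Milnor's theorem (see \cite{milnor}, \cite{johnston}) states that $s_{2k}$ detects the indecomposables of ${\rm MU}^*({\rm pt})\otimes\mathbb{Q}$, and it then remains to check that the even-dimensional generator stays indecomposable inside the full cobordism ring, so that $s_{2k}$, and hence $M$, is nonzero on it. This is the step I expect to demand the most care; it is precisely the point at which the abstract criterion above is anchored to the concrete nonvanishing asserted in Theorem~\ref{theononvanishing}.
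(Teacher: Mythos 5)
Your argument is correct, but it is organized quite differently from the one in the paper. You work top--down: you take as given that ${\rm MU}^*({\rm pt})_{\rm even}$ is a free polynomial algebra with one generator in each even dimension (as stated in the introduction), reduce the basis question to the induced map on the one-dimensional spaces of indecomposables $Q_{2k}$, and then observe that the Milnor genus descends to $Q_{2k}$ because Lemma \ref{lezeroprod} kills decomposables. The paper instead works bottom--up by induction on the dimension: it only uses the \emph{upper} bound $\dim {\rm MU}^{4i}({\rm pt})_{\rm even}\leq p(i)$ (classes are determined by their Chern numbers), and it explicitly constructs corrected classes $Y_i=X_i+\sum_{l(I)\geq 2}\alpha_I X_I$ on which every monomial $M_K$ in the even Chern characters other than ${\rm ch}_{2i}$ vanishes; the resulting matrix $\bigl(M_K(Y_J)\bigr)$ is diagonal with nonzero entries, which yields linear independence of the $X_I$ and hence the basis property by the dimension count. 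What your route buys is brevity and a clean conceptual statement (the $X_I$ form a basis iff each $X_k$ is indecomposable, iff $M$ detects it); what the paper's route buys is that the freeness of ${\rm MU}^*({\rm pt})_{\rm even}$ is essentially re-derived rather than assumed, since the dual system $\{M_K\},\{Y_J\}$ exhibits the basis directly.

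One point deserves a correction. For the implication ``the $X_I$ form a basis $\Rightarrow$ $M(X_i)\neq 0$'' you need, as you say, that $\overline{M}$ is nonzero on $Q_{2k}$, i.e.\ that the Milnor genus is not identically zero on ${\rm MU}^{4k}({\rm pt})_{\rm even}$. But you should not anchor this to Theorem \ref{theononvanishing}: that theorem is exactly what Corollary \ref{coroequiv} is meant to deduce \emph{from} the basis statement, so invoking it here would make the equivalence circular. The fact you need is purely classical and independent of any Hilbert scheme computation: writing ${\rm ch}_{2k}=\frac{(-1)^{2k-1}}{(2k-1)!}c_{2k}+(\text{decomposable monomials})$, its ``even part'' is a nonzero polynomial in the $c_{2l}$, and since the rational Chern-number pairing on ${\rm MU}^{4k}({\rm pt})\otimes\mathbb{Q}$ is perfect, the even monomials restrict to linearly independent functionals on the annihilator ${\rm MU}^{4k}({\rm pt})_{\rm even}$ of the odd ones. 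This gives $\overline{M}\neq 0$ directly and spares you the (true but not immediate) verification that the degree-$2k$ generator of ${\rm MU}^*({\rm pt})_{\rm even}$ remains indecomposable in the full cobordism ring.
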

\begin{proof} The ``only if" follows from Lemma \ref{lezeroprod} which says that ${\rm ch}_{2i}$ can have a nonzero integral on $X_{i_1}\times\ldots\times X_{i_k}$ only  for $I=\{i\}$, that is, when  $X_{i_1}\times\ldots\times X_{i_k}=X_i$.

 We have to  prove   that the products $X_{i_1}\times\ldots\times X_{i_k}$ form a basis over $\mathbb{Q}$ of the subring ${\rm MU}^*({\rm pt})_{\rm even}$ of the cobordism ring of classes $\alpha$ with vanishing  ``odd Chern numbers" $\int_\alpha M_I(c_i)$, where the monomial $M_I$ involves an odd Chern class. Equivalently, we have to show that for any such  class $\alpha\in {\rm MU}^{4n}({\rm pt})_{\rm even}$, there are unique  rational coefficients $\alpha_{I}$ indexed by partitions of $n$, such that
$$\int_\alpha P(c_2,\ldots,c_{2n})=\sum_I\alpha_I\int_{X_\cdot^I}P(c_2(X_I),\ldots,c_{2n}(X_I))$$
for any degree $2n$ weighted polynomial $P$ in the variables $c_{2l}$.
Instead of using the Chern classes $c_{2i}$ as generators, we can use the Chern characters classes ${\rm ch}_{2i}$ which are related to the Chern classes by the Newton formulas. 
We argue by induction on the dimension and conclude that for any
$i<n$, there exists a combination
\begin{eqnarray}\label{eqYcorr} Y_i=X_i+\sum_{I,l(I)\geq 2}\alpha_I X_I \in {\rm MU}^{4i}({\rm pt}),\end{eqnarray}
where, in the above sum, $I$ runs through the partitions $i=\sum_{l=1}^k i_l$ of $i$ and $l(I):=k$,
with the following property : for any degree $2i$ monomial
$M_K={\rm ch}_{2}^{k_2}\ldots {\rm ch}_{2i}^{k_{2i}}$ in the Chern characters ${\rm ch}_l$ with $l$ even, one has
\begin{eqnarray}\label{eqcorrvan} M_K(Y_i)=M_K(X_i)+\sum_{I, l(I)\geq 2}\alpha_I\int_{X_I} M_K({\rm ch}_2(X_I),\ldots,{\rm ch}_{2i}(X_I))=0 \,\,{\rm if}\,\,M_K\not={\rm ch}_{2i}.
\end{eqnarray}
Furthermore, equation (\ref{eqYcorr}), Lemma \ref{lezeroprod} and our assumptions show that
$M(Y_i)=\lambda_i\not=0$.
Formulas (\ref{eqsumpourlesch}) and (\ref{eqcorrvan}) then imply that
for any  product $Y_J=\prod_{j_1+\ldots +j_k=i} Y_{j_l}$ with $l\geq 2$ (hence all $j_s$ smaller than $i$), and any monomial $M_K$ as above of weighted degree $2i$, one has
$M_K(Y_J)=0$ for $K\not=J$, $M_K(Y_K)\not=0$. Finally, we have by assumption ${\rm ch}_{2i}(X_i)\not=0$, so
 $X_i$ and the $Y_J$ for the partitions $J$ of $i$ such that  $l(J)\geq 2$ form a basis of ${\rm MU}^{4i}({\rm pt})_{\rm even}$.
\end{proof}
\begin{rema}\label{remaprodproj}  {\rm The same criterion (without assumption on the odd Chern classes) was used by topologists  to prove that the complex  cobordism ring with rational coefficients is generated in degree $n$ by products of projective spaces $\mathbb{P}^{i_k}$ with $\sum_ki_k=n$. It suffices to prove that $M(\mathbb{P}^r)\not=0$, which is quite easy using the Euler exact sequence which gives
$${\rm ch}(\mathbb{P}^r)=(r+1){\rm exp}(h)-1,$$ with $h=c_1(\mathcal{O}_{\mathbb{P}^r}(1))$. }
\end{rema}
We now  get in particular
\begin{coro}\label{coroequiv}  Theorem \ref{theononvanishing} is equivalent to  Theorem \ref{theogenerators}.
\end{coro}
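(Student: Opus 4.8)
The plan is to deduce this corollary directly from the criterion of Proposition \ref{procriterion}, applied to the two families of holomorphic symplectic varieties at hand. For part (a) I would fix a $K3$ surface $S$ and set $X_i := S^{[i]}$; for part (b) I would fix an abelian surface (or $2$-torus) $A$ and set $X_i := {\rm Kum}_i(A)$. In either case $X_i$ is a compact complex manifold of complex dimension $2i$, so the dimension hypothesis of Proposition \ref{procriterion} is satisfied, and since $n$ is arbitrary we cover all even dimensions.

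The only point to verify is the vanishing of the odd Chern classes $c_{2l+1}(X_i)=0$. This is exactly the situation discussed in the introduction: each $X_i$ is holomorphic symplectic, hence carries an everywhere nondegenerate $(2,0)$-form $\sigma$ inducing an isomorphism $T_{X_i}^{1,0}\cong (T_{X_i}^{1,0})^*$ of complex vector bundles, which forces all odd Chern classes to vanish. Thus the hypotheses of Proposition \ref{procriterion} hold, with $\lambda_i = M(X_i)$.

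The corollary is then immediate. Proposition \ref{procriterion} asserts the equivalence of two statements: that $\lambda_i = M(X_i)\neq 0$ for every $i$, and that every even-dimensional complex cobordism class with vanishing odd Chern numbers is a unique rational combination of the products $X_I = X_{i_1}\times\cdots\times X_{i_k}$. For $X_i = S^{[i]}$ the first statement is Theorem \ref{theononvanishing}(a) and the second is Theorem \ref{theogenerators}(a); likewise for $X_i = {\rm Kum}_i(A)$ the first is Theorem \ref{theononvanishing}(b) and the second is Theorem \ref{theogenerators}(b). This yields the desired equivalence. One should also observe that the complex cobordism class of $S^{[n]}$, resp. ${\rm Kum}_n(A)$, is independent of the chosen surface $S$, resp. $A$, since these varieties form a single deformation class, so that the formulation of Theorem \ref{theogenerators} with a fixed surface is well-posed. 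Given that Proposition \ref{procriterion} is already established, there is no genuine obstacle remaining here: the corollary is a formal matching of the two families to the abstract criterion, and all the real content of the equivalence is deferred to the nonvanishing assertion of Theorem \ref{theononvanishing}.
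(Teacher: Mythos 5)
Your proposal is correct and matches the paper's own derivation: the corollary is obtained exactly by applying Proposition \ref{procriterion} to the families $X_i=S^{[i]}$ and $X_i={\rm Kum}_i(A)$, whose odd Chern classes vanish because they are holomorphic symplectic. The only cosmetic difference is that the paper justifies independence of the choice of surface via the result of \cite{EGL} (Remark \ref{remarkpourneufpoints}) rather than deformation equivalence, but this does not affect the argument.
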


\section{Proof of Theorem \ref{theononvanishing} \label{secproof}}
The proof of Theorem~\ref{theononvanishing}
will use the description of the cohomology of Hilbert schemes of points of surfaces in terms of Nakajima operators.
In particular, we will use a result of Li, Qin and Wang \cite{LQW} which
for $K$-trivial surfaces expresses the operator of multiplication by tautological classes in terms of the Nakajima basis.
We refer to \cite{NOY} for an overview of the main definitions in the subject, and for the conventions that we follow.

We will prove the following closed evaluations,
which imply Theorem~\ref{theononvanishing}.

\begin{thm} \label{thm:hilb case}
For any surface $S$ with $c_1(S) = 0$ in $H^{2}(S,\BQ)$, we have for all $n \geq 1$:
\[ \int_{S^{[n]}} \ch_{2n}( T_{S^{[n]}} ) = (-1)^n \frac{c_2(S)}{24} \frac{(2n+2)!}{n!^4 (2n-1)}. \]
\end{thm}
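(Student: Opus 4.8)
The plan is to reduce the Milnor genus $M(S^{[n]})=\int_{S^{[n]}}\ch_{2n}(T_{S^{[n]}})$ to the integral of a tautological class, and then to evaluate that integral inside the Fock space $\bigoplus_n H^*(S^{[n]},\BQ)$ using Nakajima operators together with the Li--Qin--Wang description \cite{LQW} quoted above. First I would express $T_{S^{[n]}}$ through the universal subscheme $\CZ\subset S^{[n]}\times S$. Writing $\pi$ for the projection to $S^{[n]}$, $\CI$ for the universal ideal sheaf, $u=[\CO_{\CZ}]$ for the class of the universal structure sheaf, and $(-)^\vee$ for the $K$-theoretic dual, the identification $T_{S^{[n]}}=\mathcal{E}xt^1_\pi(\CI,\CI)$, together with $\mathcal{E}xt^0_\pi(\CI,\CI)=\CO$ and, using $c_1(S)=0$ so that $\omega_\pi$ is numerically trivial and relative Serre duality is symmetric, the fact that $\mathcal{E}xt^2_\pi(\CI,\CI)$ has the same Chern character as $\CO$, gives at the level of Chern characters $[T_{S^{[n]}}]=\pi_!\big(u+u^\vee-u^\vee\!\cdot u\big)$, where I used $[\CI]=1-u$. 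Applying Grothendieck--Riemann--Roch and $\td(T_S)=1+\tfrac{1}{12}c_2(S)$ then yields the explicit expression $\ch(T_{S^{[n]}})=\pi_*\big((\ch u+\ch u^\vee-\ch u^\vee\cdot\ch u)\,\td(T_S)\big)$, presenting $\ch(T_{S^{[n]}})$ as a tautological class manufactured from $\ch(\CO_{\CZ})$.

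A preliminary observation I would record is that, because $c_1(S)=0$, a dimension count on the $S$-factor forces exactly one factor of the point class of $S$ to survive the pushforward $\pi_*$; consequently the final number is linear in $\int_S c_2(S)$, with no contribution from the intersection form on $H^2(S)$. This both explains the prefactor $c_2(S)/24$ in the statement and lets me carry $\int_S c_2(S)$ as a formal scalar, reducing the problem to computing a universal rational coefficient depending only on $n$.

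Next I would move the computation into the Fock space. The K\"unneth components of $\ch(\CO_{\CZ})$, paired with a basis of $H^*(S)$, are exactly the tautological classes whose cup-product action on the Nakajima basis Li--Qin--Wang determine for $K$-trivial surfaces (following the conventions of \cite{NOY}). Substituting their multiplication operators into the GRR expression converts $\int_{S^{[n]}}\ch_{2n}(T_{S^{[n]}})$ into an algebraic vacuum matrix coefficient of an explicit word built from Nakajima operators $\mathfrak{q}_i(\alpha)$ and these multiplication operators. To obtain a closed form in $n$ rather than a table of values, I would package the answers into the generating series $\sum_{n\ge 1}M(S^{[n]})\,q^n$ and evaluate it by systematically applying the Nakajima commutation relations, equivalently by recognizing the relevant operator as a vertex operator whose vacuum expectation can be summed.

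The main obstacle, I expect, is this final evaluation: the operator calculus naturally produces a sum over partitions (or a nest of iterated commutators), and collapsing it into the single product $\frac{(2n+2)!}{n!^4(2n-1)}$ is the real content of the theorem. Since the ratio of consecutive coefficients $\frac{(2n+4)(2n+3)(2n-1)}{(n+1)^4(2n+1)}$ is a rational function of $n$, the generating series is holonomic and, given the shape of the factorials and the sign $(-1)^n$, presumably algebraic; my strategy would be to extract a low-order differential or algebraic equation for the series from the commutation relations, solve it, and match Taylor coefficients against the claimed product. Throughout I would use the value $M(S^{[1]})=\int_S\ch_2(T_S)=-\int_S c_2(S)$, which agrees with the formula at $n=1$, as a normalization check fixing the constants in the GRR reduction, while the small-dimensional computations of Section \ref{secsmalldim} and Proposition \ref{propetite} offer independent sanity checks at $n=2,3$.
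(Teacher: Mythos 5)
Your reduction follows the same route as the paper: the tangent bundle is expressed through relative Ext sheaves of the universal ideal sheaf as in \cite[Prop 2.2]{EGL}, Grothendieck--Riemann--Roch turns $\ch(T_{S^{[n]}})$ into a tautological class built from $\ch(\CO_\CZ)$ (the paper's formula \eqref{chk}, with the $c_2(S)/12$ term coming from the Todd class exactly as you say), the Li--Qin--Wang expansion \eqref{Gdexpansion} converts the multiplication operators into Nakajima operators, and the observation that $c_1(S)=0$ forces the answer to be a universal multiple of $\int_S c_2(S)$ is also how the paper organizes the computation. Up to this point the proposal is correct and essentially identical to the paper.

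The gap is in the final evaluation, which is where the actual content of the closed formula lies, and your plan for it is not a proof. The paper does not pass through a generating series in $q$ or a vertex-operator identity; it evaluates each $n$ directly by two steps that you do not supply. First, a degree and support argument: integrating a Nakajima cycle $\Fq_{k_1}(\gamma_1)\cdots\Fq_{k_r}(\gamma_r)1$ over $S^{[n]}$ forces all $k_i=1$ with $\gamma_i$ the point class, so after acting on $\frac{1}{n!}\Fq_1(1)^n 1$ the only generalized partitions surviving in \eqref{Gdexpansion} are $\lambda=(-1)^a(1)^a$, $\tilde\lambda=(-1)^b(1)^b$ --- except that in the $\fG_i\fG_j(\Delta)$ term, where $i+j=2n+2$ supplies two extra factors, exactly one mutually canceling pair $\Fq_{-(\ell-m)}\Fq_{\ell-m}$ of higher order can occur, and tracking its commutator is what produces the $1/(2n-1)$. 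Second, the resulting double sums are collapsed by explicit binomial identities (Lemma~\ref{lemma} and Proposition~\ref{prop_combinatorics}), e.g. $\sum_{\ell,m}(-1)^{m+\ell+1}\frac{\ell-m}{m!\,\ell!\,(n-m)!\,(n-\ell)!}=\frac{n}{2(2n-1)}\frac{(2n)!}{n!^4}$. Your alternative --- recognize the operator as a vertex operator, derive a holonomic equation for $\sum_n M(S^{[n]})q^n$, and match coefficients --- is speculative: the operator $\sum_{i+j=2n+2}(-1)^{j+1}\fG_i\fG_j(\Delta)$ has a summation range tied to $n$, so the series is not the vacuum expectation of a single fixed operator, and knowing that the answer is holonomic does not determine it without first computing enough terms or exhibiting the equation, which is precisely the work being deferred. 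The skeleton is right, but the theorem is not proved until the partition analysis and the combinatorial collapse are carried out.
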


\begin{thm} \label{thm:Kum case} For any abelian surface $A$, we have for all $n \geq 1$:
\begin{align*}
\int_{\Kum_{n}(A)} \ch_{2n}( T_{\Kum_n(A)}) & = (-1)^n\frac{(2n+2)!}{n!^4}.
\end{align*}
\end{thm}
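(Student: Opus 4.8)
The plan is to compute $\int_{\Kum_n(A)} \ch_{2n}(T_{\Kum_n(A)})$ by relating the geometry of the generalized Kummer variety to that of the Hilbert scheme $A^{[n+1]}$, for which the Nakajima-calculus results of Li--Qin--Wang \cite{LQW} apply directly as in the proof of Theorem \ref{thm:hilb case}. Recall that $\Kum_n(A)$ is the fiber over $0$ of the isotrivial summation map $A^{[n+1]} \to A$ sending a length-$(n+1)$ subscheme to the sum of its support (weighted by lengths), composed with the Hilbert--Chow morphism $A^{[n+1]} \to A^{(n+1)}$. The key structural fact I would use is that $A^{[n+1]}$ is, up to an \'etale cover, a product $A \times \Kum_n(A)$: translation by $A$ acts on the fibers of the summation map, and this yields an isogeny $A \times \Kum_n(A) \to A^{[n+1]}$ of degree $(n+1)^4$. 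Under this cover the tangent bundle pulls back compatibly, so $\ch(T_{A^{[n+1]}})$ pulls back to $\ch(T_A \boxplus T_{\Kum_n(A)}) = \pr_A^* \ch(T_A) + \pr_{\Kum}^* \ch(T_{\Kum_n(A)})$. Since $T_A$ is trivial, only the top Chern character of the Kummer factor survives in top degree on the product.

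Concretely, I would first establish the numerical relation
\[
\int_{A^{[n+1]}} \ch_{2n+2}(T_{A^{[n+1]}}) = (n+1)^4 \int_{\Kum_n(A)} \ch_{2n}(T_{\Kum_n(A)}) \cdot \int_A \ch_2(T_A),
\]
but the right-hand integral over $A$ vanishes because $T_A$ is trivial, so this naive splitting must be refined: the correct statement is that the top Chern character of $A^{[n+1]}$ restricts to $\Kum_n(A)$, and one compares $\ch_{2n}(T_{\Kum_n(A)})$ with the restriction $\ch_{2n}(T_{A^{[n+1]}})|_{\Kum_n(A)}$ using the normal bundle sequence $0 \to T_{\Kum_n(A)} \to T_{A^{[n+1]}}|_{\Kum_n(A)} \to N \to 0$ with $N$ the pullback of $T_A$, hence trivial of rank $2$. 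This gives $\ch(T_{\Kum_n(A)}) = \ch(T_{A^{[n+1]}})|_{\Kum_n(A)} - 2$, so the even Chern-character pieces agree in positive degree, and in particular
\[
\int_{\Kum_n(A)} \ch_{2n}(T_{\Kum_n(A)}) = \int_{\Kum_n(A)} \ch_{2n}(T_{A^{[n+1]}})\big|_{\Kum_n(A)}.
\]

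I would then evaluate the right-hand side by the same Nakajima-operator machinery used for Theorem \ref{thm:hilb case}, applied to the $K$-trivial surface $A$ with the crucial arithmetic difference that $c_2(A) = 0$ (whereas the $K3$ surface has $c_2(S) = 24$). The tautological class $\ch_{2n+2}(T_{A^{[n+1]}})$ is expressed via \cite{LQW} in the Nakajima basis; pairing against the fundamental class of the fiber $\Kum_n(A)$ amounts to extracting the component of the class that pairs nontrivially with a Nakajima state supported over a single point of $A$, which isolates precisely the combinatorial term producing $(n+1)!/((n+1)!^3)$-type factors and the binomial $(2n+2)!$. The vanishing $c_2(A)=0$ kills all the terms that, in the $K3$ case, contributed the $c_2(S)/24$ prefactor through Euler-class insertions, but the summation-map fiber structure reinstates a compensating multiplicity $(n+1)^{\text{something}}$ from the $(n+1)^4$-fold cover; tracking this exchange is the heart of the computation.

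The main obstacle I anticipate is the careful bookkeeping of the cover/restriction, specifically proving cleanly that integrating the \emph{top} Chern character of $A^{[n+1]}$ against the class $[\Kum_n(A)]$ (rather than integrating over all of $A^{[n+1]}$) produces exactly the closed formula $(-1)^n (2n+2)!/n!^4$ with \emph{no} residual $c_2$ factor. The subtle point is that the naive product decomposition fails in top degree because $A$ contributes nothing, so the entire value must come from the interaction of the summation-map geometry with the Nakajima expression for the tautological class; I expect the cleanest route is to carry out the Nakajima computation for $A^{[n+1]}$ with the fiber-class insertion from the start, rather than trying to relate two separately-computed numbers, thereby avoiding spurious cancellations of the form $0 \cdot \infty$.
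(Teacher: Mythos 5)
Your reduction step is exactly the one the paper uses: the normal bundle sequence $0 \to T_{\Kum_n(A)} \to T_{A^{[n+1]}}|_{\Kum_n(A)} \to \sigma^{\ast}(T_A) \to 0$ with trivial quotient gives $\int_{\Kum_n(A)} \ch_{2n}(T_{\Kum_n(A)}) = \int_{A^{[n+1]}} \ch_{2n}(T_{A^{[n+1]}}) \cap [\Kum_n(A)]$, and you correctly discard the \'etale-cover/product heuristic as yielding $0=0$. But from that point on the proposal is a plan rather than a proof, and the plan as stated contains a quantitative error. The heart of the computation is the evaluation of the Nakajima expression for $\ch_{2n}(T_{A^{[n+1]}})$ against the class $[\Kum_n(A)] = \sigma^{\ast}(\pt)$, and for this you need an explicit expansion of $[\Kum_n(A)]$ in the Nakajima basis. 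The paper obtains it by showing $[\Kum_n(A)] = \fG_2(\alpha)\fG_2(\beta)\fG_2(\gamma)\fG_2(\delta)\,1$ for $\alpha,\beta,\gamma,\delta \in H^1(A)$ with $\int_A\alpha\beta\gamma\delta=1$, and then commuting: the result is a sum over all set partitions of $\{\alpha,\beta,\gamma,\delta\}$, of which the term $\frac{1}{n!}\Fq_1(\pt)\Fq_1(1)^n\,1$ ``supported over a single point of $A$'' is only the leading piece. Your suggestion that the pairing isolates the single-point component would retain only that leading term, which contributes a factor $(n+1)$ to the key pairing $\int_{A^{[n+1]}} \Fq_1^{n+1}\Fq_{-1}^{n+1}(\Delta)\,[\Kum_n(A)]$, whereas the full sum over set partitions gives $(n+1)\bigl[1+7n+6n(n-1)+n(n-1)(n-2)\bigr]=(n+1)^4$. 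So the exponent you leave as ``something'' is $4$, but it does not come from the degree of the isogeny by any argument you give; it comes from this explicit evaluation, and dropping the higher set-partition terms would produce an answer off by $(n+1)^3$.

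Two further ingredients are needed and absent. First, you must justify that in the expansion $\fG_i\fG_j(\Delta)=\sum \Fq_\lambda\Fq_{\tilde\lambda}(\Delta)/(\lambda!\tilde\lambda!)$ only partitions with all parts $\pm 1$ contribute when paired with $[\Kum_n(A)]$: any part $k>1$ forces a Nakajima interaction producing $e(A)\cdot\Delta=0$. Second, once this is established one needs the identity $\Fq_1^a\Fq_{-1}^a\Fq_1^b\Fq_{-1}^b(\Delta)=\Fq_1^{a+b}\Fq_{-1}^{a+b}(\Delta)$ and then the binomial identity $\sum_{a+b=n+1}\binom{n+1}{a}^2=\binom{2n+2}{n+1}$ to assemble $(-1)^n(n+1)^4(2n+2)!/(n+1)!^4=(-1)^n(2n+2)!/n!^4$. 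None of this is fatal to your strategy --- it is the paper's strategy --- but as written the proposal does not contain the computation that produces the stated constant, and the one concrete heuristic you offer for it would give the wrong answer.
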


\subsection{Combinatorial identities}
\begin{lemma} \label{lemma}
For $k, n\in\BN$, we have the following identities:
\begin{enumerate}
\item[(1)] $\sum_{i=0}^n\binom{n}{i}^2=\binom{2n}n$;
\item[(2)] $\sum_{i=0}^ni\binom{n}{i}^2=\frac{n}2\binom{2n}n$;
\item[(3)] $\sum_{i=0}^ni^2\binom{n}{i}^2=\frac{n^3}{2(2n-1)}\binom{2n}n$;
\item[(4)] $\sum_{i=0}^{k}(-1)^i\binom{n}{i} = (-1)^{k}\binom{n-1}{k}$;
\item[(5)] $\sum_{i=0}^{k}(-1)^i i\binom{n}{i} = (-1)^{k}n \binom{n-2}{k-1}$.
\end{enumerate}
\end{lemma}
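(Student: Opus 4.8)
The plan is to treat the five identities in two groups. Identities (1)--(3) all involve the squared binomial coefficients $\binom{n}{i}^2$ and reduce to the Vandermonde convolution, while (4)--(5) are alternating partial sums that follow from Pascal's rule by induction. The single algebraic tool that does most of the work is the absorption identity $i\binom{n}{i} = n\binom{n-1}{i-1}$, valid for $1 \le i \le n$.

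For (1), I would extract the coefficient of $x^n$ from $(1+x)^{2n} = (1+x)^n(1+x)^n$; writing $\binom{n}{i}^2 = \binom{n}{i}\binom{n}{n-i}$ exhibits the left-hand side as exactly that coefficient, namely $\binom{2n}{n}$. For (2), rather than convolving again, I would exploit the symmetry $i \leftrightarrow n-i$ of the summand: since $\binom{n}{n-i}^2 = \binom{n}{i}^2$, adding the sum to its reflection gives $2\sum_i i\binom{n}{i}^2 = \sum_i \big(i + (n-i)\big)\binom{n}{i}^2 = n\binom{2n}{n}$ by (1), so the sum is half of this. For (3), I would apply the absorption identity twice to get $i^2\binom{n}{i}^2 = n^2\binom{n-1}{i-1}^2$, whence $\sum_{i=0}^n i^2\binom{n}{i}^2 = n^2\sum_{j=0}^{n-1}\binom{n-1}{j}^2 = n^2\binom{2n-2}{n-1}$ by (1); it then remains to simplify the binomial ratio $\binom{2n-2}{n-1}/\binom{2n}{n} = \frac{n}{2(2n-1)}$, which produces the stated closed form.

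For the alternating sums, I would prove (4) by induction on $k$ using Pascal's rule in the form $\binom{n-1}{k} + \binom{n-1}{k-1} = \binom{n}{k}$: the increment of the right-hand side from $k-1$ to $k$ is $(-1)^k\big[\binom{n-1}{k}+\binom{n-1}{k-1}\big] = (-1)^k\binom{n}{k}$, which matches the new term on the left, so the partial sums telescope. Then (5) reduces to (4): absorption and an index shift give $\sum_{i=0}^k (-1)^i i\binom{n}{i} = -n\sum_{j=0}^{k-1}(-1)^j\binom{n-1}{j}$, and applying (4) with $n$ and $k$ replaced by $n-1$ and $k-1$ yields $(-1)^k n\binom{n-2}{k-1}$.

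None of these steps presents a genuine obstacle; the identities are elementary and the only care required is in the index shifts when applying the absorption identity, in particular the boundary term $i=0$, which contributes nothing and lets the sums start at $i=1$. If one prefers uniform machinery, all five also follow from creative telescoping, but the ad hoc arguments above are shorter and self-contained.
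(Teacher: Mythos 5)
Your proposal is correct; every step checks out, including the two applications of the absorption identity in (3) (the ratio $\binom{2n-2}{n-1}/\binom{2n}{n}=\frac{n}{2(2n-1)}$ is right) and the index shift in (5). However, apart from part (1), where you and the paper both extract the coefficient of $x^n$ from $(1+x)^{2n}=(1+x)^n(1+x)^n$, your route differs from the paper's. The paper treats all five identities uniformly by coefficient extraction from explicit polynomials: for (2) and (3) it compares coefficients in $(1+x)^n\cdot\frac{d}{dx}(1+x)^n$ and $(1+x)^n\cdot\big(\frac{d}{dx}\big)^2(1+x)^n$, and for (4) and (5) it expands $(1-x)^{n-1}=\frac{1}{1-x}(1-x)^n$ and $-n(1-x)^{n-2}=\frac{1}{1-x}\frac{d}{dx}(1-x)^n$, the factor $\frac{1}{1-x}=1+x+x^2+\cdots$ being what produces the partial (truncated) sums. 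You instead use the reflection $i\leftrightarrow n-i$ for (2), double absorption plus part (1) for (3), induction via Pascal's rule for (4), and absorption plus (4) for (5). Both approaches are elementary and short; the paper's buys uniformity (one mechanism, five identities, and in particular the geometric-series trick handles the truncation at $k$ cleanly), while yours is more self-contained at each step and avoids differentiating generating functions, at the cost of a small amount of case-by-case ingenuity.
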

\begin{proof}
For (1), one can compare the degree-$n$ coefficient of the polynomial
$(1+x)^{2n}$: the left hand side is obtained using the identity
$(1+x)^{2n}=(1+x)^n(1+x)^n$, while the right hand side is simply the binomial coefficient.
For (2) and (3), we consider the polynomials $(1+x)^n\cdot
\frac{d}{d x}(1+x)^n$ and $(1+x)^n\cdot
\big(\frac{d}{d x}\big)^2(1+x)^n$, and follow the same idea as
(1).

For (4), we consider the degree-$k$ coefficient of the polynomial
$(1-x)^{n-1}$: the right hand side is again just the binomial coefficient, while the left hand side is
obtained using the Taylor expansion
$(1-x)^{n-1}=\frac1{1-x}\cdot (1-x)^n=(1+x+x^2+\cdots)\cdot (1-x)^n$.
Similarly, for (5) we consider
$-n(1-x)^{n-2}=\frac1{1-x}\cdot\frac{d}{d x}(1-x)^n$.
\end{proof}
\begin{prop} \label{prop_combinatorics}
We have the following identity
\[
\sum_{l=0}^{n}\sum_{m=0}^{l-1}(-1)^{m+l+1} \frac{l-m} { m!\, l!\, (n-m)!\, (n-l)! }
=\frac{n}{2(2n-1)}\frac{(2n)!}{n!^4}.
\]
\end{prop}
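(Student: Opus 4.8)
The plan is to clear denominators and reduce everything to a manipulation of binomial coefficients, then evaluate the inner sum over $m$ using the partial-summation identities (4) and (5) of Lemma \ref{lemma}. Multiplying both sides by $n!^2$ and using $\frac{1}{m!(n-m)!}=\frac{1}{n!}\binom nm$ and $\frac{1}{l!(n-l)!}=\frac{1}{n!}\binom nl$, the claimed identity becomes
\[
\sum_{l=0}^{n}\sum_{m=0}^{l-1}(-1)^{m+l+1}(l-m)\binom nm\binom nl=\frac{n}{2(2n-1)}\binom{2n}{n}.
\]
I would then fix $l$ and treat the inner sum $S_l:=\sum_{m=0}^{l-1}(-1)^m(l-m)\binom nm$ first, splitting it as $l\sum_{m=0}^{l-1}(-1)^m\binom nm-\sum_{m=0}^{l-1}(-1)^m m\binom nm$ and applying parts (4) and (5) of Lemma \ref{lemma} to obtain $S_l=(-1)^{l-1}\bigl[l\binom{n-1}{l-1}-n\binom{n-2}{l-2}\bigr]$.

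The key simplification is that, after substituting $S_l$ back, the sign $(-1)^{l+1}$ coming from the outer sum multiplies the sign $(-1)^{l-1}$ in $S_l$ to give $(-1)^{2l}=1$, so all signs disappear and the left-hand side collapses to
\[
\sum_{l=0}^{n}\binom nl\Bigl[l\binom{n-1}{l-1}-n\binom{n-2}{l-2}\Bigr]=\sum_{l}l\binom nl\binom{n-1}{l-1}-n\sum_{l}\binom nl\binom{n-2}{l-2}.
\]
For the first sum I would use $l\binom nl=n\binom{n-1}{l-1}$ together with part (1) of Lemma \ref{lemma} applied to $n-1$, giving $n\sum_j\binom{n-1}{j}^2=n\binom{2n-2}{n-1}$; for the second sum, rewriting $\binom{n-2}{l-2}=\binom{n-2}{n-l}$ and using Vandermonde's convolution $\sum_l\binom nl\binom{n-2}{n-l}=\binom{2n-2}{n}$ yields $n\binom{2n-2}{n}$. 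Hence the left-hand side equals $n\bigl[\binom{2n-2}{n-1}-\binom{2n-2}{n}\bigr]$.

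It then remains to check the elementary factorial identity $\binom{2n-2}{n-1}-\binom{2n-2}{n}=\frac{(2n-2)!}{n!\,(n-1)!}$ and to verify, using $(2n)!=2n(2n-1)(2n-2)!$ and $\frac{1}{(n-1)!}=\frac{n}{n!}$, that $\frac{n^2(2n-2)!}{n!^2}=\frac{n}{2(2n-1)}\binom{2n}{n}$, which finishes the proof. The only real obstacle is this first reduction: recognizing that the inner sum over $m$ is exactly what the partial-binomial-sum identities (4) and (5) are designed to evaluate, and that their signs conspire with the outer factor $(-1)^{l+1}$ to cancel completely. Once the double sum has been unwound into two single Vandermonde-type sums, the remaining steps are routine factorial bookkeeping.
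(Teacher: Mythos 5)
Your proof is correct, and its essential step coincides with the paper's: rewrite the factorials as binomial coefficients, evaluate the inner sum over $m$ via parts (4) and (5) of Lemma \ref{lemma}, and observe that the resulting sign $(-1)^{l-1}$ cancels against the outer factor $(-1)^{l+1}$. The only divergence is in the endgame. The paper converts $l\binom{n-1}{l-1}$ and $n\binom{n-2}{l-2}$ back into $\frac{l^2}{n}\binom{n}{l}$ and $\frac{l^2-l}{n-1}\binom{n}{l}$ so as to invoke the weighted square sums of parts (2) and (3) of the Lemma; you instead keep the mixed products and evaluate $\sum_l l\binom{n}{l}\binom{n-1}{l-1}=n\binom{2n-2}{n-1}$ via part (1) applied to $n-1$, and $\sum_l\binom{n}{l}\binom{n-2}{n-l}=\binom{2n-2}{n}$ via Vandermonde's convolution, arriving at $n\bigl[\binom{2n-2}{n-1}-\binom{2n-2}{n}\bigr]=\frac{n^2(2n-2)!}{n!^2}$, which indeed equals $\frac{n}{2(2n-1)}\binom{2n}{n}$. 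Both closings are routine and correct; yours bypasses parts (2) and (3) entirely at the cost of importing Vandermonde, which is not stated in the paper's Lemma but is standard (part (1) being the special case you also use), and it arguably yields a slightly cleaner final expression.
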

\begin{proof}
We rewrite the left hand side using the combinatorial identities from Lemma~\ref{lemma}
\begin{align*}
& \sum_{l=0}^{n}\sum_{m=0}^{l-1}
(-1)^{m+l+1} \frac{l-m} { m! l! (n-m)! (n-l)! } \\
& =
\frac1{n!^2}\sum_{l=0}^n\sum_{m=0}^{l-1}(-1)^{m+l+1}(l-m)\binom{n}{m}\binom{n}{l}\\
(\text{take out $l$})&=
\frac1{n!^2}\sum_{l=0}^n(-1)^l\binom{n}{l}\left(l\sum_{m=0}^{l-1}(-1)^{m+1}\binom{n}{m}+\sum_{m=0}^{l-1}(-1)^m m\binom{n}{m}\right)\\
\left(\substack{\text{using Lemma~\ref{lemma}}\\\text{(4) and (5)}}\right)&=
\frac1{n!^2}\sum_{l=0}^n(-1)^l\binom{n}{l}\left((-1)^l
l\binom{n-1}{l-1}+(-1)^{l-1}n\binom{n-2}{l-2}\right)\\
&=
\frac1{n!^2}\sum_{l=0}^n\binom{n}{l}\left(
\frac{l^2}n\binom{n}{l}-\frac{l^2-l}{n-1}\binom{n}{l}\right)\\
&=
\frac1{n!^2}\left(\frac1{n-1}\sum_{l=0}^nl\binom{n}{l}^2-\frac1{n(n-1)}\sum_{l=0}^nl^2\binom{n}{l}^2\right)\\
\left(\substack{\text{using Lemma~\ref{lemma}}\\\text{(2) and (3)}}\right)&=
\frac{n}{2(2n-1)}\frac{(2n)!}{n!^4}.
\end{align*}
\end{proof}

\subsection{Hilbert schemes of points}
Let $\CZ \subset S^{[n]} \times S$ be the universal subscheme and let
$\pi, \pi_S$ be the projections of $S^{[n]} \times S$ to the factors.
For any $\gamma \in H^{\ast}(S)$ and $d \in \BZ$ let
\[ \fG_d(\gamma) : H^{\ast}(S^{[n]}) \to H^{\ast}(S^{[n]}) \]
be the operator of multiplication with the class
$\pi_{\ast}( \ch_d( \CO_\CZ - \CO_{S^{[n]} \times S}) \cdot \pi_S^{\ast}(\gamma) )$.

Let from now on $S$ be a surface with $c_1(S) = 0$ in $H^2(S,\BQ)$. Then by a result of Li, Qin and Wang \cite[Thm.4.6]{LQW} we have that
\begin{equation} \label{Gdexpansion}
\fG_d(\gamma) = - \sum_{ \substack{ |\lambda| = 0 \\ \ell(\lambda) = d}} \frac{\Fq_{\lambda}}{\lambda!}(\Delta_{\ast}(\gamma)) + \sum_{ \substack{ |\lambda|=0 \\ \ell(\lambda) = d-2 }} \frac{s(\lambda)}{24 \cdot \lambda!} \Fq_{\lambda}( \Delta_{\ast}( \gamma \cdot e(S) ) )
\end{equation}
where $e(S) \in H^4(S)$ is the Euler class of $S$ and $\Fq_m(\alpha)$ are the Nakajima Heisenberg operators;
the other notations here follow \cite[Section 4]{NOY}.\footnote{
There is one exception: our definition for $\fG_d(\gamma)$ agrees with \cite{NOY} in case $d \geq 1$,
while for $d=0$ we have $\fG_0(\gamma) = - \left( \int_S \gamma \right) \id$ (instead of $\fG_0(\gamma)=0$ in \cite{NOY}). The advantage is that \eqref{Gdexpansion} holds now for all $d \in \BZ$.}

The tangent bundle of the Hilbert scheme can be expressed as an relative Ext sheaf of the universal ideal sheaves  \cite[Prop 2.2]{EGL}.
This gives an expression for the operator of multiplication with $\ch_k(T)$ in terms of the $\fG$'s 
as follows
\begin{align} \mathrm{mult}_{\ch_k(T_{S^{[n]}})} 
& = \sum_{i+j=k+2} (-1)^{j+1} \fG_i \fG_j(\Delta) + \frac{c_2(S)}{12} \sum_{i+j=k} (-1)^{j+1} \fG_i(\pt) \fG_j(\pt)
\label{chk}
\end{align}
where $k \geq 1$ and $\pt \in H^4(S)$ is the class of a point on $S$;
see also \cite[4.9]{NOY}.
Hence Theorem~\ref{thm:hilb case} is implied by the following two lemmas:

\begin{lemma} \label{lemma1}
\[ \sum_{i+j=2n} (-1)^{j+1}  \int_{S^{[n]}} \fG_i(\pt) \fG_j(\pt) 1_{S^{[n]}} = (-1)^{n+1} \frac{(2n)!}{n!^4} \]
\end{lemma}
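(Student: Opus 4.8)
The plan is to evaluate the left-hand side as a vacuum expectation in the Nakajima Fock space $\bigoplus_m H^{\ast}(S^{[m]})$ and to collapse the resulting combinatorics onto Lemma~\ref{lemma}(1). First I specialize \eqref{Gdexpansion} to $\gamma = \pt$. Since $\pt \cdot e(S) = 0$ for degree reasons the second sum disappears, and since the small diagonal $\Delta \colon S \to S^{\ell(\lambda)}$ satisfies $\Delta_{\ast}(\pt) = \pt \times \cdots \times \pt$, the K\"unneth distribution turns the surviving term into
\[ \fG_d(\pt) = - \sum_{\substack{|\lambda| = 0 \\ \ell(\lambda) = d}} \frac{1}{\lambda!} \prod_k \Fq_{\lambda_k}(\pt). \]
Writing the unit as $1_{S^{[n]}} = \tfrac{1}{n!}\Fq_1(1_S)^n \, \mathbf 1$, with $\mathbf 1 \in H^{\ast}(S^{[0]})$ the vacuum, and using that $\int_{S^{[n]}}(\cdot)$ is the pairing against $1_{S^{[n]}}$ up to the standard sign relating the Poincar\'e and Fock pairings, I rewrite each summand as a vacuum expectation
\[ \int_{S^{[n]}} \fG_i(\pt)\fG_j(\pt) 1_{S^{[n]}} = \frac{\varepsilon_n}{(n!)^2}\, \big\langle \mathbf 1,\ \Fq_{-1}(1_S)^n\, \fG_i(\pt)\,\fG_j(\pt)\, \Fq_1(1_S)^n\, \mathbf 1 \big\rangle, \]
where $\varepsilon_n = \pm 1$ is the convention-dependent sign.

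The key step is a selection argument for the surviving contractions. Every operator in the bracket must be completely contracted, and the commutator of two Nakajima operators is proportional to the corresponding $\int_S(\cdot)(\cdot)$; here $\int_S \pt^2 = 0$ and $\int_S 1_S \cdot 1_S = 0$, while $\int_S \pt \cdot 1_S = 1$. Hence a $\pt$-weighted operator can only contract with a $1_S$-weighted one, and such a contraction is nonzero only between parts $+1$ and $-1$. It follows that the inner operators $\Fq_{1}(\pt)$ coming from $\fG_i,\fG_j$ must contract with the outer $\Fq_{-1}(1_S)$'s and the $\Fq_{-1}(\pt)$'s with the outer $\Fq_{1}(1_S)$'s. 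In particular only the balanced partitions $\lambda = (1^p,(-1)^p)$ and $\mu = (1^q,(-1)^q)$ survive, forcing $i = 2p$, $j = 2q$ with $p+q = n$ (and both factors contribute nothing when $i$ or $j$ is odd), since the $p+q$ inner creators must match the $n$ outer annihilators.

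Once the indices are pinned down, the bracket factors into the two independent Heisenberg pairs $(\Fq_{-1}(1_S),\Fq_1(\pt))$ and $(\Fq_{-1}(\pt),\Fq_1(1_S))$, which commute with one another; after reordering the inner creators and annihilators (whose mutual commutators vanish), each sector contributes $n!$ times a commutator constant, so the bracket equals $(n!)^2$ up to sign. Collecting the normalization $\tfrac{1}{(n!)^2}$, the factor $\tfrac{1}{\lambda!\,\mu!} = \tfrac{1}{(p!)^2(q!)^2}$, and the sign $(-1)^{j+1} = -1$, the total sum reduces to a constant times
\[ \sum_{p+q=n} \frac{1}{(p!)^2(q!)^2} = \frac{1}{(n!)^2}\sum_{p=0}^n \binom{n}{p}^2 = \frac{1}{(n!)^2}\binom{2n}{n} = \frac{(2n)!}{n!^4}, \]
by Lemma~\ref{lemma}(1). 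A direct check in the case $n=1$, where $\fG_0(\pt) = -\id$ and $\fG_2(\pt) = [\pt_S]$, pins the global sign to $(-1)^{n+1}$ and yields the claim. The main obstacle is the careful bookkeeping of the conventions of \cite{NOY}—the normalization $\lambda!$, the Poincar\'e-versus-Fock pairing sign $\varepsilon_n$, and the commutator signs—together with a clean justification of the contraction-selection argument; the combinatorics, by contrast, collapses at once to Lemma~\ref{lemma}(1).
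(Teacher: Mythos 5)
Your argument is essentially the paper's: expand $\fG_i(\pt)$ and $\fG_j(\pt)$ via \eqref{Gdexpansion} (the $e(S)$-term dying since $\pt\cdot e(S)=0$), show that only the balanced partitions $\lambda=(1^a(-1)^a)$, $\tilde\lambda=(1^b(-1)^b)$ with $a+b=n$ survive, and collapse the sum to $\sum_a\binom{n}{a}^2=\binom{2n}{n}$ via Lemma~\ref{lemma}(1). Your selection argument is phrased differently but is equivalent in content: you rule out contractions among the $\fG$-operators using $\int_S\pt\cdot\pt=0$ and $\int_S 1\cdot 1=0$, whereas the paper counts the cohomological degree $n-r+\sum_i\deg_{\BC}(\gamma_i)$ of a Nakajima cycle to force all surviving factors to be $\Fq_{\pm1}(\pt)$. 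Both are correct and yield the same conclusion.

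The one genuine weak point is the sign. After your reduction, the answer has the form $c\cdot\kappa^n\cdot\frac{(2n)!}{n!^4}$, where $\kappa$ is the (convention-dependent) constant of a single contraction $[\Fq_{-1}(\pt),\Fq_1(1)]$ and $c$ absorbs the remaining fixed signs, with the further complication that your $\varepsilon_n$ may itself depend on $n$. A single numerical check at $n=1$ determines only the product $c\cdot\kappa$, not the $n$-dependence: for instance $(c,\kappa)=(-1,-1)$ gives $(-1)^{n+1}$ while $(c,\kappa)=(1,1)$ gives $+1$ for all $n$, and both agree at $n=1$. So "a direct check in the case $n=1$ pins the global sign to $(-1)^{n+1}$" is not justified as stated. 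To close this you must actually evaluate the commutator constant in the conventions of \cite{NOY} (equivalently, verify the paper's assertion that
\[
\int_{S^{[n]}} \Fq_1(\pt)^a\Fq_{-1}(\pt)^a\Fq_1(\pt)^b\Fq_{-1}(\pt)^b\,\frac{\Fq_1(1)^n}{n!}\,1=(-1)^n
\]
for all $a+b=n$), which is a one-line computation once the conventions are fixed, but it is the step your write-up explicitly defers. Everything else is sound.
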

\begin{proof}
In the Nakajima basis the unit of $H^{\ast}(S^{[n]})$ is $\frac{1}{n!} \Fq_1(1)^n 1_{S^{[0]}}$ 
where we let $1_{S^{[0]}}$ denote the unit in the cohomology of $S^{[0]} = \{ \ast \}$ (the subscript $S^{[0]}$ is usually dropped in what follows).
We hence have to evaluate
\begin{align}
& \sum_{i+j=2n} (-1)^{j+1} \int_{S^{[n]}} \fG_i(\pt) \fG_j(\pt) \frac{1}{n!} \Fq_1(1)^n 1 \notag \\
& = \sum_{i+j=2n} (-1)^{j+1} \int_{S^{[n]}} \left( \sum_{l(\lambda) = i, |\lambda|=0} \frac{\Fq_{\lambda}( \Delta_{\ast}(\pt))}{\lambda!} \right)
\left( \sum_{l(\widetilde{\lambda}) = j, |\widetilde{\lambda}|=0} \frac{\Fq_{\widetilde{\lambda}}( \Delta_{\ast}(\pt))}{\widetilde{\lambda}!} \right) \frac{\Fq_1(1)^n}{n!} 1 \label{abcs}
\end{align}
The (complex\footnote{The complex degree $\deg_{\BC}(\gamma)$ is half the real degree, i.e. $\gamma \in H^{2 \deg_{\BC}(\gamma)}$.}) cohomological degree of a Nakajima cycle $\Fq_{k_1}(\gamma_1) \cdots \Fq_{k_r}(\gamma_r) 1$ lying in $H^{\ast}(S^{[n]})$ is $n - r + \sum_i \deg_{\BC}(\gamma_i)$.
Hence for the integral of such a cycle to be non-zero, we need $k_i = 1$ and $\deg_{\BC}(\gamma_i) = 2$ for all $i$.
In particular, the term $\Fq_1(1)^n$ appearing in \eqref{abcs} has to be transformed into a multiple of $\Fq_1(\pt)^n$ under the operators $\fG_i(\pt) \fG_j(\pt)$.
Hence among the $\Fq_{\lambda}$ and $\Fq_{\tilde{\lambda}}$ we must have $n$ operators of the form $\Fq_{-1}$ and $n$ operators $\Fq_1$. Since this accounts for all possible Nakajima operators which can appear, we need that
$\lambda = (-1)^a (1)^a$ and $\tilde{\lambda} = (-1)^b (1)^b$ where $i=2a$ and $j=2b$.
The above expression thus evaluates to
\begin{align*}
& = (-1) \sum_{a+b = n} \frac{1}{a!^2 b!^2} \int_{S^{[n]}} \Fq_1(\pt)^a \Fq_{-1}(\pt)^a \Fq_1(\pt)^b \Fq_{-1}(\pt)^b \frac{ \Fq_1(1)^n}{n!} 1 \\
& = (-1) \sum_{a+b = n} \frac{1}{a!^2 b!^2} (-1)^n \\
& = (-1)^{n+1} \frac{(2n)!}{n!^4}
\end{align*}
where in the last equality we used the first part of Lemma~\ref{lemma}.
\end{proof}

\begin{lemma}
\[ \sum_{i+j=2n+2} (-1)^{j+1} \int_{S^{[n]}} \fG_i \fG_j(\Delta) 1_{S^{[n]}}  = c_2(S) (-1)^n \frac{(2n)!}{n!^4} \left[ \frac{n}{12} + \frac{n}{2 (2n-1)} \right] \]
\end{lemma}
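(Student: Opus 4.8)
The plan is to insert the Künneth decomposition $\Delta = \sum_{\nu} e_\nu \otimes e^\nu$ of the diagonal, so that $\fG_i\fG_j(\Delta) = \sum_\nu \fG_i(e_\nu)\fG_j(e^\nu)$, and to expand each factor by the Li--Qin--Wang formula \eqref{Gdexpansion}, writing $\fG_d(\gamma) = A_d(\gamma) + B_d(\gamma)$ for its main part ($\ell(\lambda)=d$) and its correction part ($\ell(\lambda)=d-2$). Since $c_1(S)=0$ we have $e(S)=c_2(S)\,\pt$, hence $\gamma\cdot e(S)=0$ as soon as $\deg_{\BC}\gamma\geq 1$; thus $B_d(\pt)=B_d(e_\alpha)=0$ for $e_\alpha\in H^2(S)$, and the correction part survives only on the Künneth pieces $\pt\otimes 1$ and $1\otimes\pt$. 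This yields the clean decomposition
\[ \fG_i\fG_j(\Delta) = A_iA_j(\Delta) + A_i(\pt)B_j(1) + B_i(1)A_j(\pt). \]
Throughout I would write $1_{S^{[n]}}=\tfrac1{n!}\Fq_1(1)^n 1$ and use, exactly as in the proof of Lemma~\ref{lemma1}, that the integral over $S^{[n]}$ is nonzero only on the top class $\Fq_1(\pt)^n 1$; the degree count then forces every surviving operator carrying a point class to be $\Fq_{\pm 1}(\pt)$.

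For the two mixed terms all insertions are points, because $\Delta_{\ast}(\pt)=\pt^{\otimes i}$ and $\Delta_{\ast}(e(S))=c_2(S)\,\pt^{\otimes(j-2)}$; since $\langle\pt,\pt\rangle=0$ there are no internal contractions and the situation is identical to Lemma~\ref{lemma1}: the annihilators kill the $n$ copies of $\Fq_1(1)$ and the creators build $\Fq_1(\pt)^n$. The one new input is the weight $s(\lambda)$, which I would evaluate on $\lambda=(-1)^c(1)^c$, finding $s(\lambda)=2c$. With $i=2d$, $j=2c+2$ and $c+d=n$ this produces $\tfrac{c_2(S)}{24}(-1)^n\sum_{c+d=n}\frac{2c}{c!^2\,d!^2}$, and the same for $B_i(1)A_j(\pt)$ by symmetry; summing the two and applying Lemma~\ref{lemma}(2) gives the contribution $c_2(S)(-1)^n\frac{(2n)!}{n!^4}\cdot\frac{n}{12}$.

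The heart of the proof is the main--main term. Here I would first establish the gluing identity $\sum_\nu \Delta^{(i)}_{\ast}(e_\nu)\boxtimes\Delta^{(j)}_{\ast}(e^\nu)=\Delta^{(i+j)}_{\ast}(1)$, where $\Delta^{(m)}\colon S\to S^m$ is the small diagonal; it follows from $\sum_\nu e_\nu\otimes e^\nu=\delta_{\ast}(1)$ and functoriality of pushforward along $\Delta^{(i)}\times\Delta^{(j)}$. Using $\Fq_\lambda(\alpha)\Fq_{\tilde\lambda}(\beta)=\Fq_{\lambda\cup\tilde\lambda}(\alpha\boxtimes\beta)$ this collapses the double expansion into
\[ A_iA_j(\Delta)=\sum_{\substack{\ell(\lambda)=i,\ \ell(\tilde\lambda)=j\\ |\lambda|=|\tilde\lambda|=0}}\frac{1}{\lambda!\,\tilde\lambda!}\,\Fq_{\lambda\cup\tilde\lambda}\big(\Delta^{(2n+2)}_{\ast}(1)\big). \]
The class $\Delta^{(2n+2)}_{\ast}(1)$ has complex degree $4n+2$, so in each surviving Künneth component exactly $2n$ of its $2n+2$ factors are points and the remaining two carry the full diagonal $\sum_\nu e_\nu\otimes e^\nu$. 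Degree and particle-number bookkeeping then force the $2n$ point-operators to be $\Fq_{\pm1}(\pt)$ ($n$ annihilators killing the initial $\Fq_1(1)$'s and $n$ creators assembling $\Fq_1(\pt)^n$), while the two diagonal-operators must be contracted away; the relevant contraction is an internal creation--annihilation pair $\Fq_k,\Fq_{-k}$ of some size $k\geq 1$, which over the diagonal evaluates to $k\,c_2(S)$ — the factor $k$ coming from $[\Fq_k,\Fq_{-k}]$ and the value $c_2(S)$ being the self-intersection of the diagonal.

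Finally I would perform the Wick contraction, recording the splitting of the $2n+2$ operators between $\fG_i$ and $\fG_j$ (with the sign $(-1)^{j+1}$), the size $k$ and position of the internal pair, and the coefficients $1/(\lambda!\,\tilde\lambda!)$ together with the number of matchings. I expect the expression to condense to $c_2(S)(-1)^n$ times the double sum of Proposition~\ref{prop_combinatorics}, in which the numerator $l-m$ is precisely the size $k$ of the internal contraction and the constraint $m<l$ records $k\geq 1$; by that proposition the main--main term then equals $c_2(S)(-1)^n\frac{(2n)!}{n!^4}\cdot\frac{n}{2(2n-1)}$, and adding the two contributions yields the claimed value. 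The main obstacle is exactly this last bookkeeping: controlling the operator ordering inside $\Fq_{\lambda\cup\tilde\lambda}$ and all commutator signs so that the combinatorics condense precisely into $\sum_{l=0}^{n}\sum_{m=0}^{l-1}(-1)^{m+l+1}\frac{l-m}{m!\,l!\,(n-m)!\,(n-l)!}$ rather than into a superficially different sum.
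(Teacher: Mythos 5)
Your proposal is correct and follows the paper's own proof essentially step for step: you split $\fG_i \fG_j(\Delta)$ into the main term plus the two correction terms coming from the second summand of \eqref{Gdexpansion}, evaluate the correction terms exactly as in Lemma~\ref{lemma1} (with $s(\lambda)=2c$ and Lemma~\ref{lemma}(2) producing the $\tfrac{n}{12}$ summand), and reduce the main term --- via gluing the two small diagonals into $\Delta^{(2n+2)}_{\ast}(1)$ and a single internal contraction $\Fq_{\pm(l-m)}$ over the diagonal worth $(l-m)\,c_2(S)$ --- to precisely the double sum of Proposition~\ref{prop_combinatorics} yielding $\tfrac{n}{2(2n-1)}$. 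The only part you defer (the normalization $b_{m,\ell}$ separating the cases $m=\ell-1$ and $m<\ell-1$, and the signs from commuting annihilation operators to the right) is likewise only sketched in the paper, so there is no substantive difference in approach.
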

\begin{proof}
We insert the expansion \eqref{Gdexpansion} for $\fG_i$. The contribution from the second term in \eqref{Gdexpansion}
can be computed by the same methods which were used in Lemma~\ref{lemma1}. The result is $e(S)/24 \sum_{a+b=n}(-1)^n 2a / (a!^2 b!^2)$.
The same applies to the contribution from the second term in $\fG_j$.
Inserting this and using part (2) of Lemma~\ref{lemma} we find that:
\[ 
\sum_{i+j=2n+2} (-1)^{j+1} \int_{S^{[n]}} \fG_i \fG_j(\Delta) 1_{S^{[n]}}  = 
I + c_2(S) (-1)^n \frac{n^2 (2n-1)!}{6 \cdot n!^4} \]
where $I$ is the contribution from the first terms in $\fG_i$ and $\fG_j$,
that is
\[
I = \sum_{i+j=2n+2} (-1)^{j+1} \int_{S^{[n]}} 
\left( \sum_{l(\lambda) = i, |\lambda|=0} \frac{\Fq_{\lambda}( \Delta_{\ast}(\Delta_1))}{\lambda!} \right)
\left( \sum_{l(\widetilde{\lambda}) = j, |\widetilde{\lambda}|=0} \frac{\Fq_{\widetilde{\lambda}}( \Delta_{\ast}(\Delta_2))}{\widetilde{\lambda}!} \right) \frac{\Fq_1(1)^n}{n!} 1
\]
where $\Delta_1, \Delta_2$ stands for summing over the K\"unneth factors of
the diagonal in $H^{\ast}(S \times S)$.
With similar reasoning as before (i.e. among the $\Fq_{\lambda}$ and $\Fq_{\tilde{\lambda}}$ we need $n$ operators $\Fq_{1}$ and $\Fq_{-1}$ each) we now compute:
\begin{align*}
I 
& = \sum_{\ell=1}^{n} \sum_{m=0}^{\ell - 1} \int_{S^{[n]}} \frac{  \Fq_{1}^{n-m} \Fq_{-1}^{n - \ell} \Fq_{-(\ell-m)} \Fq_{\ell-m} \Fq_1^m \Fq_{-1}^{\ell}(\Delta) }{b_{m,\ell}} (-1)^{m + \ell} 
\frac{ \Fq_1(1)^n }{n!} 1
\end{align*}
with
\[ b_{m,\ell} = \begin{cases}
m! \ell! (n-m)! (n-\ell)! & \text{ if } m < \ell - 1 \\
\ell !^2 (n- \ell + 1)!^2 & \text{ if } m = \ell - 1.
\end{cases}
\]
Commuting the negative Nakajima operators to the right
and using the Nakajima commutation relations for cases $m=\ell-1$ and $m<\ell-1$ separately, we get
\[ I = 
e(S) (-1)^n \sum_{\ell=1}^{n} \sum_{m=0}^{\ell-1} (-1)^{m+\ell+1} \frac{( \ell - m)}{m! \ell! (n-m)! (n-\ell)!}
= \frac{ e(S) (-1)^n n}{2(2n-1)}\frac{(2n)!}{n!^4}
\]
where we applied Proposition~\ref{prop_combinatorics} in the last step.
\end{proof}

\subsection{Generalized Kummer varieties}
We first compute the class of $\Kum_n(A)$ in the Nakajima basis of $A^{[n+1]}$.
\begin{lemma} \label{lemma:Kum class} In $H^{4}(A^{[n+1]})$ we have
\[ [ \Kum_n(A) ] = \fG_2(\alpha) \fG_2(\beta) \fG_2(\gamma) \fG_2(\delta) 1_{A^{[n+1]}} \]
for any $\alpha, \beta, \gamma, \delta \in H^1(A)$ such that $\int_{A} \alpha \beta \gamma \delta = 1$.
\end{lemma}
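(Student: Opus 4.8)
The plan is to identify the right-hand side with the pullback of the point class under the summation morphism. Write $\sigma \colon A^{[n+1]} \to A$ for the morphism sending a length-$(n+1)$ subscheme to the sum, in the group $A$, of its points, so that by definition $\Kum_n(A) = \sigma^{-1}(0)$. Since $\sigma$ is a smooth surjective fibration, the fibre class satisfies $[\Kum_n(A)] = \sigma^{\ast}[0]$, where $[0] \in H^4(A)$ is the class of a point. As $A$ is a torus, $H^{\ast}(A) = \Lambda^{\ast} H^1(A)$ and $H^4(A)$ is one-dimensional, so the normalization $\int_A \alpha\beta\gamma\delta = 1$ gives exactly $[0] = \alpha\beta\gamma\delta$ in $H^4(A)$. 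Because each operator $\fG_2(\cdot)$ is, by its very definition, cup-product with a fixed class, we have
\[ \fG_2(\alpha)\fG_2(\beta)\fG_2(\gamma)\fG_2(\delta) 1_{A^{[n+1]}} = g_\alpha \cup g_\beta \cup g_\gamma \cup g_\delta, \qquad g_\eta := \fG_2(\eta) 1_{A^{[n+1]}}. \]
Using that $\sigma^{\ast}$ is a ring homomorphism, the lemma therefore reduces to the single identity $g_\eta = \sigma^{\ast}\eta$ for every $\eta \in H^1(A)$.

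To prove $g_\eta = \sigma^{\ast}\eta$, I would first unwind the definition of $\fG_2$. Since $\CO_{\CZ}$ is supported in complex codimension $2$, its Chern character has $\ch_0 = \ch_1 = 0$ and leading term $\ch_2(\CO_{\CZ}) = [\CZ]$, so $\ch_2(\CO_{\CZ} - \CO_{A^{[n+1]}\times A}) = [\CZ]$ and hence
\[ g_\eta = \pi_{\ast}\big( [\CZ] \cdot \pi_A^{\ast} \eta \big) = \rho_{\ast} q^{\ast} \eta, \]
where $\pi, \pi_A$ are the projections from $A^{[n+1]} \times A$ and $\rho \colon \CZ \to A^{[n+1]}$, $q \colon \CZ \to A$ are their restrictions to $\CZ$. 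This is the tautological degree-one class attached to $\eta$. Over the open locus $U \subset A^{[n+1]}$ of reduced subschemes, $\rho$ is an \'etale cover of degree $n+1$ whose sheets record the $n+1$ points, so $\rho_{\ast} q^{\ast}\eta$ restricts to the sum of $\eta$ over the points; on the other hand, since $\eta \in H^1(A)$ is additive for the group law, $\sigma^{\ast}\eta$ restricts to the same sum. Hence $g_\eta$ and $\sigma^{\ast}\eta$ agree on $U$.

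The remaining point, and the only genuine subtlety, is to upgrade this agreement on $U$ to an equality on all of $A^{[n+1]}$. The complement $A^{[n+1]} \setminus U$ is the exceptional divisor of the Hilbert--Chow morphism, of complex codimension $1$, so $U$ is only the complement of a divisor; nevertheless the restriction map $H^1(A^{[n+1]}) \to H^1(U)$ is injective, because any nonzero cohomology class supported on a closed subset of complex codimension $\geq 1$ has degree $\geq 2$, so the kernel vanishes in degree one. Thus equality of the two restrictions forces $g_\eta = \sigma^{\ast}\eta$ in $H^1(A^{[n+1]})$, and taking the cup product of the four resulting identities yields the lemma. I expect the main thing to be careful about is not the generic computation but precisely this cohomological comparison across the boundary divisor; once the degree-one injectivity of restriction to $U$ is in hand, the rest is formal.
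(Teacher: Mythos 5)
Your proposal is correct, and it reduces the lemma to exactly the same key identity as the paper does, namely $\fG_2(\eta)1_{A^{[n+1]}}=\sigma^{\ast}\eta$ for $\eta\in H^1(A)$ (both arguments use that the $\fG_2$'s are cup-product operators, that $[\Kum_n(A)]=\sigma^{\ast}(\pt)$, and that $\pt=\alpha\beta\gamma\delta$). Where you diverge is in how this identity is verified. The paper stays inside the Nakajima calculus: it pairs both classes against the degree-one homology classes $L(x)=\Fq_1(x)\Fq_1(\pt)^{n}1$ for $x\in H_1(A)$, checks that $\sigma_{\ast}L(x)=x$ by a chain-level description, computes $\int G_2(\alpha)\cdot L(x)=\int_A\alpha x$ by a direct commutator calculation, and concludes because the $L(x)$ generate $H_1(A^{[n+1]})$. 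You instead argue geometrically: you identify $\fG_2(\eta)1$ with the tautological class $\rho_{\ast}q^{\ast}\eta$ (using that $\CO_{\CZ}$ has support of codimension $2$, so $\ch_2(\CO_{\CZ}-\CO)=[\CZ]$), compare it with $\sigma^{\ast}\eta$ over the configuration locus $U$ via the degree-$(n+1)$ covering and the primitivity of $H^1$ of a torus, and then use that $H^1(A^{[n+1]})\to H^1(U)$ is injective because the exceptional divisor has real codimension $2$. Both routes are sound; the paper's has the advantage of requiring no geometric input beyond the Nakajima formalism already set up for the rest of Section 3 (and of generalizing immediately to pairings against other Nakajima cycles), while yours is more self-contained conceptually and makes transparent why the identity holds --- it is the additivity of $H^1$ under the group law --- at the modest cost of the codimension argument needed to cross the Hilbert--Chow boundary. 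One small point to keep in mind if you write this up: over $U$ the two classes are most cleanly compared after pulling back to the ordered configuration space, where both become $\sum_i p_i^{\ast}\eta$; injectivity of this pullback on rational cohomology (the cover being finite) then gives the equality on $U$ itself.
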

\begin{proof}
Let $\sigma : A^{[n+1]} \to A$ be the sum map. We have
\[ [ \Kum_n(A) ] = \sigma^{\ast}(\pt). \]
Hence it suffices to show that $\sigma^{\ast}(\alpha) = G_2(\alpha)$ for any $\alpha \in H^1(A)$,
where we let $G_2(\alpha) = \fG_2(\alpha) 1_{A^{[n+1]}}$.
Consider $x \in H^3(A,\BZ) = H_1(A,\BZ)$ and let $L(x) = \Fq_1(x) \Fq_{1}(\pt)^{n} 1$.
When $x$ is represented by a singular chain, than $L(x)$ is represented by the chain obtained from the former by adding $n-1$ distinct points to it.
This shows that $\sigma_{\ast} L(x) = x$,
and hence 
\[ \int_{S^{[n]}} \sigma^{\ast}(\alpha) \cdot L(x) = \int_A \alpha \cdot \sigma_{\ast} L(x) = \int_A \alpha x. \]
On the other hand, a direct calculation using the Nakajima operators also shows $\int G_2(\alpha) \cdot L(x) = \int_{A} \alpha x$.
Since the $L(x)$ generate $H_1(A^{[n+1]})$ this yields the claim.
\end{proof}

Since $[\fG_2(x), \Fq_1(y)] = \Fq_1(xy)$ for all $x, y \in H^{\ast}(S)$ one finds that
\begin{equation} \label{abss}
[\Kum_n(A)]
=
\sum_{\pi = \{ \pi_i \}} \sigma_{\pi} \frac{1}{ (n+1-\ell(\pi))!} \prod_i \Fq_1\left( \prod_{x \in \pi_i} x \right) \Fq_1(1)^{n+1-\ell(\pi)} 1
\end{equation}
with the following notation: 
\begin{itemize}[itemsep=0pt]
\item $\pi$ runs over all set partitions of $\{ \alpha, \beta, \gamma, \delta \}$ with $l(\pi)$ parts,
\item $\sigma_{\pi} \in \{ \pm 1 \}$ is the sign obtained from bringing
$\prod_i \prod_{x \in \pi_i} x$ into the order $\alpha \beta \gamma \delta$,
\item in case $n \leq 2$ we sum only over set partitions with $l(\pi) \leq n+1$. 
\end{itemize}
The first terms read:
\begin{multline*}
[\Kum_n(A)]
= \frac{1}{n!} \Fq_1( \pt ) \Fq_1(1)^{n} 1 + \frac{1}{(n-1)!} \Fq_1(\alpha) \Fq_1(\beta \gamma \delta) \Fq_1(1)^{n-1} 1 \\
+ \ldots + \frac{1}{(n-3)!}
\Fq_1(\alpha) \Fq_1(\beta) \Fq_1(\gamma) \Fq_1(\delta) \Fq_1(1)^{n-3} 1.
\end{multline*}

\begin{lemma} \label{lemma:Delta Kum}
\[
\int_{A^{[n+1]}} \Fq_1^{n+1} \Fq_{-1}^{n+1}(\Delta)
[\Kum_n(A)]
= (n+1)^4
\]
\end{lemma}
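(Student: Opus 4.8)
The plan is to exploit the Heisenberg/Nakajima algebra structure together with the explicit expansion of $[\Kum_n(A)]$ in \eqref{abss}, reducing the whole evaluation to a classical Stirling identity.

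First I would unwind the operator. Writing the diagonal of $A \times A$ in K\"unneth form $\Delta = \sum \Delta' \otimes \Delta''$ (with $\deg_{\BC}\Delta' + \deg_{\BC}\Delta'' = 2$), the operator reads
\[ \Fq_1^{n+1}\Fq_{-1}^{n+1}(\Delta) = \sum_{\Delta = \sum \Delta' \otimes \Delta''} \Fq_1^{n+1}(\Delta_{\ast}\Delta')\, \Fq_{-1}^{n+1}(\Delta_{\ast}\Delta''), \]
where $\Delta_{\ast} \colon H^{\ast}(A) \to H^{\ast}(A^{n+1})$ is the small-diagonal pushforward. The crucial point is the particle-number grading: since $[\Kum_n(A)]$ has particle number $n+1$ and every $\Fq_{-1}$ lowers it by one, the block $\Fq_{-1}^{n+1}(\Delta_{\ast} \Delta'')$ sends $[\Kum_n(A)]$ all the way down to $H^{\ast}(A^{[0]}) = \BQ$, a scalar, after which $\Fq_1^{n+1}(\Delta_{\ast} \Delta')$ recreates a class on $A^{[n+1]}$. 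Hence the expression factors as a product of two vacuum matrix elements. A degree count then kills all but one K\"unneth term: the annihilation block lands in $H^0(A^{[0]})$ only when $\deg_{\BC} \Delta'' = 0$, i.e. $\Delta'' = 1$ and $\Delta' = \pt$, and for this term the creation block is automatically of top degree. Thus
\[ \int_{A^{[n+1]}} \Fq_1^{n+1}\Fq_{-1}^{n+1}(\Delta)[\Kum_n(A)] = \Big( \Fq_{-1}^{n+1}(\Delta_{\ast} 1)[\Kum_n(A)] \Big) \cdot \int_{A^{[n+1]}} \Fq_1^{n+1}(\Delta_{\ast} \pt)\, 1. \]

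The second factor is immediate: $\Delta_{\ast} \pt = \pt^{\boxtimes (n+1)}$, so $\Fq_1^{n+1}(\Delta_{\ast} \pt)1 = \Fq_1(\pt)^{n+1}1$, which is (up to sign) the point class, with integral $\pm 1$. For the first factor I would insert the expansion \eqref{abss} of $[\Kum_n(A)]$ and perform the full Wick contraction of the $n+1$ operators $\Fq_{-1}(\Delta_{\ast} 1)$ against the $n+1$ creation operators of each term. Because $\Delta_{\ast} 1$ is the class of the small diagonal, each of the $(n+1)!$ complete matchings contributes $\int_A$ of the product of all the creation arguments; for the term indexed by a set partition $\pi$ of $\{\alpha,\beta,\gamma,\delta\}$ this product is $\sigma_{\pi}\, \alpha\beta\gamma\delta = \sigma_{\pi}\, \pt$, whose integral is $\sigma_{\pi}$. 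Combined with the sign $\sigma_{\pi}$ and the factor $1/(n+1-\ell(\pi))!$ already present in \eqref{abss}, and using $\sigma_{\pi}^2 = 1$, the contribution of $\pi$ is $\pm (n+1)!/(n+1-\ell(\pi))!$, independent of $\sigma_{\pi}$.

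Summing over all set partitions and grouping them by their number of blocks $k = \ell(\pi)$ turns the first factor into $\pm \sum_{k=1}^{4} S(4,k)\,(n+1)(n)\cdots(n+2-k)$, where $S(4,k)$ counts the set partitions of a $4$-element set into $k$ blocks (so $S(4,1),\dots,S(4,4) = 1,7,6,1$). This is exactly the classical expansion of a monomial in falling factorials, $\sum_{k} S(4,k)\, x^{\underline{k}} = x^4$, evaluated at $x = n+1$, giving $(n+1)^4$. I note that the falling factorial $(n+1)^{\underline{k}}$ vanishes for $k > n+1$, which automatically accounts for the restriction on $\ell(\pi)$ imposed in \eqref{abss} when $n \leq 2$, so the formula is uniform in $n$. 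The step I expect to be most delicate is the sign bookkeeping: the classes $\alpha,\beta,\gamma,\delta$ are odd, so the Nakajima commutation relations, the permutation sum in the Wick contraction, and the definition of $\sigma_{\pi}$ all carry Koszul signs; one must check that they cancel consistently (the squaring $\sigma_{\pi}^2 = 1$ being the key simplification) and that the two global signs, from the point-class normalization and from the contraction, multiply to $+1$. The combinatorial core, by contrast, is the clean Stirling identity.
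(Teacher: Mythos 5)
Your proposal follows essentially the same route as the paper's proof: the evaluation against each term of \eqref{abss} reduces to a per-partition constant $(-1)^{n+1}(n+1)!/(n+1-\ell(\pi))!$, and summing over set partitions of $\{\alpha,\beta,\gamma,\delta\}$ grouped by block number is exactly the paper's $(n+1)\bigl[1+7n+6n(n-1)+n(n-1)(n-2)\bigr]$, i.e.\ your Stirling identity $\sum_k S(4,k)\,(n+1)^{\underline{k}}=(n+1)^4$. Your factorization through the vacuum $H^{\ast}(A^{[0]})$ and the degree count isolating the K\"unneth term $\pt\otimes 1$ of the diagonal are a correct and more explicit way of organizing what the paper calls the ``straightforward evaluation,'' so the substance is fine. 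The one loose end you flag --- the global sign --- is real, but it does not resolve to $+1$: with the paper's conventions $\int_{A^{[n+1]}}\Fq_1(\pt)^{n+1}1=1$, while each of the $n+1$ contractions $[\Fq_{-1}(\gamma),\Fq_1(z)]=-\int_A\gamma z$ in the annihilation block contributes a factor $-1$, so the total is $(-1)^{n+1}(n+1)^4$. This is precisely what the paper's own proof obtains and what is then used in the proof of Theorem~\ref{thm:Kum case}; the sign $(-1)^{n+1}$ is simply missing from the printed statement of the lemma, so your hedged $\pm$ was warranted rather than a defect of your argument.
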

\begin{proof}
Using Lemma~\ref{lemma:Kum class}, equation \eqref{abss} and the straightforward evaluation
\[
\sigma_{\pi} \int_{A^{[n+1]}} \Fq_1^{n+1} \Fq_{-1}^{n+1}(\Delta)
\prod_i \Fq_1\left( \prod_{x \in \pi_i} x \right) \Fq_1(1)^{n+1-\ell(\pi)} 1
=
(-1)^{n+1} (n+1)!
\]
for every $\pi$, we find that
\begin{align*}
\int_{A^{[n+1]}} \Fq_1^{n+1} \Fq_{-1}^{n+1}(\Delta)
[\Kum_n(A)]
& = \sum_{\pi} \frac{ (n+1)! (-1)^{n+1} }{ (n+1 - l(\pi))!} \\
& = (-1)^{n+1} (n+1) \Big[ 1 + 7 n + 6 n (n-1) + n (n-1) (n-2) \Big] \\
& = (-1)^{n+1} (n+1)^4
\end{align*}
\end{proof}

\begin{proof}[Proof of Theorem~\ref{thm:Kum case}]
We have the exact sequence
\[ 0 \to T_{\Kum_n(A)} \to T_{A^{[n+1]}}|_{\Kum_n(A)} \to \sigma^{\ast}(T_A)|_{\Kum_n(A)} \to 0 \]
which together with \eqref{Gdexpansion} and \eqref{chk} (using $e(A) = 0$) shows that
\begin{align*}
\int_{\Kum_n(A)} \ch_{2n}(T_{\Kum_n(A)})
& = \int_{A^{[n+1]}} \ch_{2n}(T_{A^{[n+1]}}) \cap [\Kum_n(A)] \\
& = \sum_{i+j=2n+2} (-1)^{j+1} \int_{A^{[n+1]}} \fG_i \fG_j(\Delta) 
[\Kum_n(A)]
\end{align*}

Consider the expansion $\fG_i \fG_j(\Delta) = \sum_{l(\lambda) = i, l(\tilde{\lambda}) = j} \Fq_{\lambda} \Fq_{\tilde{\lambda}}(\Delta) / (\lambda! \tilde{\lambda}!)$.
Since $\Fq_{\tilde{\lambda}}$ acts on \eqref{abss} which consists only of terms of the form
$\prod_i \Fq_1(x_i) \, 1$,
for a summand to contribute, $\tilde{\lambda}$ can only have negative parts equal to $-1$. Assume $\tilde{\lambda}$ has a positive part $k>1$. Then $\lambda$ has to have a corresponding negative part $-k$, and these two parts have to interact when commuting all negative Nakajima operators to the right.
However, this will yield the term
\[ [\Fq_{-k}, \Fq_k] \Fq_{\lambda'} \Fq_{\tilde{\lambda}'}( \pi_{12 \ast}( \Delta_{12} \Delta_{1 2 \cdots (l(\lambda) + l(\tilde{\lambda}))}) ) = 
-k \Fq_{\lambda'} \Fq_{\tilde{\lambda}'}( e(A) \Delta ) = 0 \]
where $\pi_{12}$ is the projection away from the first two factors and $\lambda', \tilde{\lambda}'$ are the partitions $\lambda, \tilde{\lambda}$ without the parts $k, -k$.
We conclude that only the summands with $\lambda = (-1)^a (1)^a$ and $\tilde{\lambda} = (-1)^b (1)^b$ where $i=2a$ and $j=2b$ can contribute to the integral. Moreover, applying a similar argument we have
$\Fq_1^a \Fq_{-1}^a \Fq_{1}^b \Fq_{-1}^b (\Delta) = \Fq_{1}^{a+b} \Fq_{-1}^{a+b}(\Delta)$.

We thus find the following expression:
\begin{align*}
& =
\sum_{a+b=n+1} \frac{(-1)}{a!^2 b!^2} 
\int_{A^{[n+1]}} \Fq_1^{n+1} \Fq_{-1}^{n+1}(\Delta) [\Kum_n(A)] \\
& = \sum_{a+b=n+1} \frac{(-1)}{a!^2 b!^2} (-1)^{n+1} (n+1)^4 \\
& = (-1)^{n} \frac{(2n+2)!}{n!^4}.
\end{align*}
where we used the first part of Lemma~\ref{lemma}.
\end{proof}

The computations above can be generalized to arbitrary products of Chern characters.
The following qualitative result is almost immediate:
\begin{prop} \label{prop:positivity}
Let $n \geq 1$. For any partition $n = k_1 + k_2 + \ldots + k_r$ we have
\[
(-1)^n \int_{\Kum_n(A)} \ch_{2 k_1}(T_{\Kum_n(A)}) \cdots \ch_{2 k_r}(T_{\Kum_n(A)}) > 0.
\]
\end{prop}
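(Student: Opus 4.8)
The plan is to carry out, for a product of Chern characters, the same Nakajima-operator computation used to prove Theorem~\ref{thm:Kum case}, and then to read off the sign.

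First I would reduce to the Hilbert scheme $A^{[n+1]}$. The exact sequence $0\to T_{\Kum_n(A)}\to T_{A^{[n+1]}}|_{\Kum_n(A)}\to\sigma^{\ast}(T_A)|_{\Kum_n(A)}\to 0$ together with the triviality of $T_A$, which gives $\ch_{>0}(\sigma^{\ast}T_A)=0$, yields $\ch_{2k_s}(T_{\Kum_n(A)})=\ch_{2k_s}(T_{A^{[n+1]}})|_{\Kum_n(A)}$ for every $s$, so that
\[
\int_{\Kum_n(A)}\prod_{s=1}^{r}\ch_{2k_s}(T_{\Kum_n(A)})=\int_{A^{[n+1]}}\Big(\prod_{s=1}^{r}\ch_{2k_s}(T_{A^{[n+1]}})\Big)\cap[\Kum_n(A)].
\]
I would then expand each factor using \eqref{chk} and \eqref{Gdexpansion}: since $e(A)=c_2(A)=0$, only the first term of each survives, leaving $\ch_{2k_s}(T)=\sum_{i+j=2k_s+2}(-1)^{j+1}\fG_i\fG_j(\Delta)$ with $\fG_d(\gamma)=-\sum_{|\lambda|=0,\,\ell(\lambda)=d}\Fq_\lambda(\Delta_{\ast}\gamma)/\lambda!$.

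The integral then becomes a normal-ordering problem: one moves the $2n+2r$ Nakajima operators produced by the $\fG$'s past the class $[\Kum_n(A)]$, which by \eqref{abss} is a sum of monomials $\prod\Fq_1(\cdot)\,1$, and the Heisenberg relations express the answer as a sum over Wick contraction patterns. As in the proof of Theorem~\ref{thm:Kum case}, any contraction of a part $k$ against $-k$ with $|k|\ge 2$ internal to a single $\fG_i\fG_j(\Delta)$ produces the factor $e(A)=0$; combined with the degree constraint for a nonzero top-dimensional integral on $A^{[n+1]}$, I expect this to force every surviving pattern to use only the operators $\Fq_{\pm 1}$, so that each factor reduces to the shape $\Fq_1^{a_s}\Fq_{-1}^{a_s}\Fq_1^{b_s}\Fq_{-1}^{b_s}(\Delta)$ with $a_s+b_s=k_s+1$, exactly as in the single-factor case.

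Granting this reduction, the sign is uniform. Each factor contributes $(-1)^{j_s+1}=-1$ (as $j_s=2b_s$ is even), giving $(-1)^{r}$; the two minus signs from the $\fG$'s cancel; and the $n+r$ contractions of the operators $\Fq_{-1}$ against creation operators each contribute a $-1$, giving $(-1)^{n+r}$. Hence every surviving pattern carries the common sign $(-1)^{n+2r}=(-1)^{n}$, while the remaining numerical weights are positive sums of products of binomial coefficients of the type evaluated in Lemma~\ref{lemma}. Since the all-$\Fq_{\pm1}$ ``main term'' is strictly nonzero (it is the direct analogue of the computation proving Theorem~\ref{thm:Kum case}) and all contributions share the sign $(-1)^n$, there is no cancellation and $(-1)^n\int_{\Kum_n(A)}\prod_s\ch_{2k_s}(T_{\Kum_n(A)})>0$.

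The step I expect to be the main obstacle is the reduction claimed in the third paragraph, namely that higher Nakajima parts cannot survive. Internal to one $\fG_i\fG_j(\Delta)$ this is the $e(A)=0$ mechanism already in use, but for a genuine product one must also control cross-factor contractions of a part $k\ge 2$ of one $\fG$ against a part $-k$ of another, where the two operators carry classes drawn from different diagonals and the $e(A)=0$ argument no longer applies verbatim. Ruling these out, or else checking that they again contribute with sign $(-1)^n$ and positive weight, requires a careful degree count together with Koszul-sign bookkeeping over the odd cohomology $H^{1}(A)$ and $H^{3}(A)$; establishing this sign-definiteness across all contraction patterns is the only delicate point, the rest being a routine extension of the single Chern character computation.
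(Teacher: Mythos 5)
Your setup (reduction to $A^{[n+1]}$, expansion via \eqref{chk} and \eqref{Gdexpansion}, killing the $e(A)$- and $c_2(A)$-terms) matches the paper, but the reduction you propose in your third paragraph --- that every surviving contraction pattern uses only the operators $\Fq_{\pm 1}$, so that each factor collapses to $\Fq_1^{a_s}\Fq_{-1}^{a_s}\Fq_1^{b_s}\Fq_{-1}^{b_s}(\Delta)$ as in the single-factor case --- is false for $r\geq 2$, and the step you defer at the end is exactly the heart of the matter. The $e(A)=0$ mechanism only kills contractions of a part $k$ against $-k$ \emph{internal} to one $\fG_{i_s}\fG_{j_s}(\Delta)$; cross-factor contractions with $|k|\geq 2$ are not excluded by any degree count, and they genuinely contribute. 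This is visible in the paper's own closed formula for the double Chern character numbers, where the factors $2i+1$ in the sum arise precisely from commutators $[\Fq_{-(2i+1)},\Fq_{2i+1}]$ between the two $\fG$-blocks. So the problem cannot be reduced to the computation of Theorem~\ref{thm:Kum case}, and your sign bookkeeping, which assumes $j_s=2b_s$ is even because all parts are $\pm 1$, does not apply to the terms that actually carry higher parts: for a generalized partition $\lambda$ with $|\lambda|=0$ one only has $\ell(\lambda)\equiv\ell_{\text{even}}(\lambda)\pmod 2$, so $j_s=\ell(\tilde\lambda_s)$ could a priori be odd whenever even-size parts occur, and the overall sign would then flip.

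The paper's proof supplies exactly the two ingredients you are missing. First, a counting argument: the degree constraint forces at least $r-1$ cross-factor Nakajima interactions, while the conditions (a) no internal interactions and (b) at most one interaction per pair of blocks cap the number at $r-1$; each such interaction contributes a negative factor $-k$, giving a controlled sign $(-1)^{r-1}$ and reducing everything to a positive multiple of $\Fq_1^{n+1}\Fq_{-1}^{n+1}(\Delta)$ acting on $[\Kum_n(A)]$, whose sign is pinned down by Lemma~\ref{lemma:Delta Kum}. Second, a parity argument excluding even-size parts: each block containing an even Nakajima factor contains an even number of them, all of which must interact across blocks, and this forces too many interactions, violating (a) or (b); hence all surviving parts are odd, whence $\ell_{\text{even}}(\tilde\lambda_s)=0$ and $j_1+\cdots+j_r$ is even. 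Without these two steps the sign-definiteness you need is not established, so the proposal as written has a genuine gap rather than a routine extension of the single-factor computation.
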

\begin{proof}
Let $n-1 = k_1 + \ldots + k_{r}$ be a partition of $n-1$. Then
\begin{multline*}
\int_{\Kum_{n-1}( A)} \ch_{2 k_1}(T) \cdots \ch_{2 k_{r}}(T) \\
=
\sum_{\substack{i_1 + j_1 = 2k_1+2 \\ \ldots \\ i_r + j_r = 2 k_r+2 }}
(-1)^{j_1 + \ldots + j_r + r}
\int_{A^{[n]}} \fG_{i_1} \fG_{j_1}(\Delta) \cdots \fG_{i_r} \fG_{j_r}(\Delta) [\Kum_{n-1}(A)]
\end{multline*}
We express the $\fG_d$ in terms of Nakajima operators via \eqref{Gdexpansion},
which produces a sum consisting of summands with precisely
\[ \sum_{s=1}^{r} i_s + j_s = \sum_s (2 k_s + 2) = 2n + 2 (r-1) \]
Nakajima factors acting on the class of $\Kum_{n-1}(A)$.
When commuting all negative Nakajima operators to the right,
we see that for a term to contribute
there have to be at least $r-1$ Nakajima interactions between these $2n+2(r-1)$ factors.
Moreover, since $e(A) = 0$ (compare the proof of Theorem~\ref{thm:Kum case})
only the following is allowed:
\begin{itemize}[itemsep=0pt]
\item[(a)] There can be no Nakajima interacts between factors belonging to the same $\fG_{i_s} \fG_{j_s}(\Delta)$.
\item[(b)] There can be at most one Nakajima interaction between factors belonging to $\fG_{i_s} \fG_{j_s}(\Delta)$ and $\fG_{i_{s'}} \fG_{j_{s'}}(\Delta)$ for $s \neq s'$.
\end{itemize}
This shows that there can be at most $r-1$ Nakajima interactions.
The total sign contribution from these Nakajima interactions is $(-1)^{r-1}$
and the outcome will be a multiple of the operator $\Fq_{1}^n \Fq_{-1}^n(\Delta)$.
By Lemma~\ref{lemma:Delta Kum} the 
degree of $\Fq_{1}^n \Fq_{-1}^n(\Delta) [\Kum_{n-1}(A)]$ yields a sign of $(-1)^n$.
Since there always is at least one summand that contributes with a non-zero values,
the claim now follows as soon as we can prove that
$j_1 + \ldots + j_r$ is even.

If $\lambda = \big( \ldots (-2)^{l_2} (-1)^{l_1} (1)^{l_1} (2)^{l_2} \ldots \big)$ is a generalized partition of size $|\lambda| = \sum_{i} i l_i = 0$, then
by considering this equality mod $2$ we get that the number of odd parts $l_{\text{odd}} := \sum_j l_{2j+1}$ is even, and hence that $l(\lambda)$ is equal to the number of even parts $l_{\text{even}}(\lambda) := \sum_{j} l_{2j}$ modulo $2$.
Let $\lambda_{s}, \tilde{\lambda}_{s}$ be the generalized partitions appearing in a given summand of $\fG_{i_s} \fG_{j_s}$. We see
\[ (-1)^{j_1 + \ldots + j_r} = (-1)^{l_{\text{even}}(\lambda_1) + \ldots l_{\text{even}}(\lambda_r)}. \]

Moreover, since $i_s+j_s$ is even, for every $s$ we have $l_{\text{even}}(\lambda_s) + l_{\text{even}}(\tilde{\lambda}_s)$ is even.
This shows that there is always an even number of even Nakajima factors in $\fG_{i_s} \fG_{j_s}(\Delta)$.
Let $m$ be the number of $s \in \{ 1, \ldots, r \}$ such that there exists even Nakajima factors in $\fG_{i_s} \fG_{j_s}(\Delta)$. Since all even Nakajima factors have to interact with each other,
we see that there are at least $2m$ Nakajima interactions between these $m$ factors,
This implies that either (a) or (b) above is violated, and the corresponding contribution vanishes.
Hence for any non-zero summand contributing to the Chern character number,
all Nakajima factors are odd, so we have $j_s \equiv 0 ( 2 )$
and therefore $(-1)^{j_1 + \ldots + j_r}$ even.
\end{proof}

\begin{rmk}
Arbitrary Chern character numbers of $\Kum_n(A)$ can be computed in a parallel manner,
however the expressions become more complicated.
For example, the double Chern character numbers of the generalized Kummer for $0<k<n$ are given as
\[
\begin{aligned}
&\int_{ \Kum_n(A)}
\ch_{2k} \ch_{2n-2k}\\
=&4(-1)^n(n+1)^4(2k+1)!(2n-2k+1)!\sum_{i=0}^k\frac{2i+1}{((k-i)!(k+i+1)!(n-k-i)!(n-k+i+1)!)^2}.
\end{aligned}
\]
where $\ch_k = \ch_k(T_{\Kum_n(A)})$ are the Chern characters of the tangent bundle.
For $k = 1$ one gets
\[
\int_{ \Kum_n(A)}
\ch_2 \ch_{2n-2}
=
(-1)^n \frac{(2n)!}{n!^4} \left( 4 n (n+1)^2 (n^2 + n + 1) \right).
\]
\end{rmk}

\section{Remarks and open questions\label{secremaopen}}
A first obvious question is the following
\begin{question} Compute  $M(\Sigma^{[n]})$ for any smooth projective surface $\Sigma$.
\end{question}
More precisely, it is a consequence of \cite{EGL} that we have a formula
\begin{eqnarray}\label{eqexpressiongenerale} M(\Sigma^{[n]})=\alpha_n c_1(\Sigma)^2+\beta_n c_2(\Sigma),
\end{eqnarray}
so the question is to compute $\alpha_n$ and $\beta_n$. 
Formula (\ref{eqexpressiongenerale}) follows from the main result of \cite{EGL} which says that $M(\Sigma^{[n]})$ depends only on $c_1(\Sigma)^2$ and $c_2(\Sigma)$  and from  the formula
$$ \Sigma^{[n]}=\sqcup_{k+l=n} \Sigma_1^{[k]}\times \Sigma_2^{[l]}$$
when $\Sigma=\Sigma_1\sqcup \Sigma_2$, which by Lemma \ref{lezeroprod} gives
$$M(\Sigma^{[n]})=M(\Sigma_1^{[n]})+ M(\Sigma_2^{[n]}),$$ proving that
$M(\Sigma^{[n]})$ is a linear function of $c_1(\Sigma)^2$ and $c_2(\Sigma)$. Equation (\ref{eqexpressiongenerale}) suggests that another approach to Theorem \ref{theononvanishing} would be by computing the Milnor genus of $\Sigma^{[n]}$ for two conveniently chosen surfaces $\Sigma$, in the spirit of \cite{voisinsegre}. 

Theorem~\ref{thm:hilb case} shows that $\beta_n=(-1)^n \frac{(2n+2)!}{24(2n-1)(n!)^4}$. The Milnor genus of $(\mathbb P^2)^{[n]}$ can be numerically computed using Bott's residue formula for small values of $n$, so we get the following list of $\alpha_n$.
\begin{center}
\begin{tabular}{ |c|c| } 
\hline
$n$ & $\alpha_n$ \\
\hline
1 & $1/2$\\
2 & $-5/12$\\
3 & $91/540$\\
4 & $-67/1680$\\
5 & $5599/907200$\\
6 & $-8047/11975040$\\
7 & $295381/5448643200$\\
8 & $-17616097/5230697472000$\\
9 & $797006281/4801780279296000$\\
10 & $-404188861/60822550204416000$\\
11 & $15479922001/70250045486100480000$\\
12 & $-8942373821/1454175941562279936000$\\
\hline
\end{tabular}
\end{center}

Turning to  hyper-K\"{a}hler geometry, an obvious open question, that was our original motivation for formulating Theorem \ref{theogenerators}, is
\begin{question}\label{question1} What are the constraints on the complex cobordism classes of hyper-K\"{a}hler manifolds?
\end{question}
In view of Theorem \ref{theogenerators}, we can rephrase this question in terms of inequalities or equalities between the coefficients $\alpha_I(X)$ (resp. $\beta_I$)  given by  Theorem \ref{theogenerators}, expressing the class of $X$ as a combination of classes of the $S^{[I]}$ (resp. ${\rm Kum}_I(A)$). One obvious restriction is the affine relation given by the fact that $\chi(X,\mathcal{O}_X)=n+1$ for $X$ hyper-K\"{a}hler of dimension $2n$. Using the Hirzebruch-Riemann-Roch formula, this gives a relation between the Chern  numbers of $X$, but we can express it more simply using the $\alpha_I$ since
$\chi(S^{[I]},\mathcal{O}_{S^{[I]}})=(n_1+1)\ldots(n_k+1)$ for the partition $I$ of $n$ given by $n=n_1+\ldots +n_k$.
The relation is thus
\begin{eqnarray} \label{eqaffine} n+1=\sum_I\alpha_I(n_1+1)\ldots(n_k+1)
\end{eqnarray}
and similarly for the $\beta_I$.
For example, in dimension $4$, the Hirzebruch-Riemann-Roch formula  provides the relation (see \cite{ogradyccm})
\begin{eqnarray} \label{eqchitop3} 3=\frac{1}{240}(c_2(X)^2-\frac{1}{3}c_4(X)),
\end{eqnarray}
while in our setting, it writes 
$$3\alpha_{2}+4\alpha_{1,1}=3.$$
In the case of dimension $4$ we have two topological models, the Hilbert scheme $S^{[2]}$ and the generalized Kummer variety $K_2(A)$ and they clearly have independent classes, since otherwise by (\ref{eqaffine}) their classes would be equal, hence also their topological  Euler-Poincar\'{e} characteristic $c_4$, which is not the case. In dimension  $6$, we have
$3$ topological models, namely $S^{[3]},\,K_3(A)$ and ${\rm OG6}$ constructed in \cite{ogrady}, and
 their classes are linearly  independent, as proves the following computation.
 The Chern numbers $c_2^3,\,c_2c_4,\,c_6$ of $K3^{[3]}$ are computed in \cite{EGL},  those of $K_3(A)$ are computed in \cite{Nieperkummer}, and those of ${\rm OG}6$ are computed in \cite{MongardiRapSac}. Thanks to these works, the matrix of Chern numbers for these three varieties takes the form  (where the first line indicates the Chern numbers of $K3^{[3]}$, the second line those of $K_3(A)$, and the third line those of ${\rm OG}6$):
 $$\begin{pmatrix}
 36800&14720  & 3200\\
 30208&6784 & 448\\
30720 &7680 & 1920
\end{pmatrix} .$$
The determinant of this matrix is nonzero, proving the independence of the three classes.
 Thus, up to dimension $3$,  the classes of hyper-K\"{a}hler manifolds  generate the affine space defined by (\ref{eqaffine}). It is likely that there are   linear relations in higher dimension.

Other contraints are given by inequalities. For example, the class $c_2$ has positivity properties related to the existence of K\"{a}hler-Einstein metrics. Positivity results for some Chern numbers have been also obtained by Jiang \cite{jiangchen} who proves that the coefficients of the Riemann-Roch polynomial of a line bundle $L$ on $X$, expressed as a polynomial in $q(L)$, has positive coefficients. It is proved in  \cite{Nieper} that for an adequate  normalization of the Beauville-Bogomolov form $q$, these coefficients are given by  Chern numbers of $X$ (depending only on the dimension).  In dimension $4$, work of Guan \cite{guan} gives inequalities on $c_4(X)$  that come from the study of the cohomology algebra of $X$. In higher dimension $2n$, work of \cite{GLetco} also predicts bounds on Betti numbers which in turn gives conjectural bounds on the topological Euler-Poincar\'{e} characteristic $c_{2n}(X)$. 
It would be very interesting to have an idea of the convex set generated by classes of hyper-K\"{a}hler manifolds.  Let us now mention three specific questions in this direction. 

\vspace{0.5cm}

(a) {\bf The numbers $\chi(X,\Omega_X^i)$.} In the case of  the varieties  $S^{[n]}$ and ${\rm Kum}_n(A)$, we have the following result.
\begin{lemm}\label{lechiincrease} Let $S$ be a $K3$ surface. Then the numbers
$(-1)^i\chi(S^{[n]},\Omega_{S^{[n]}}^i)$ are increasing in the range $0\leq i\leq n$.

Similarly, for $n$ fixed, the numbers $(-1)^i\chi({\rm Kum}_n(A),\Omega^i_{{\rm Kum}_n(A)})$ are increasing.
\end{lemm}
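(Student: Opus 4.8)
The plan is to read the numbers $(-1)^i\chi(X,\Omega_X^i)$ off the $\chi_y$-genus specialized at $y=-t$. Writing $\chi_{-t}(X)=\sum_{p,q}(-1)^{p+q}h^{p,q}(X)\,t^p$, one has $(-1)^i\chi(X,\Omega_X^i)=[t^i]\,\chi_{-t}(X)$, and Serre duality on the $2n$-fold $X$ gives $h^{p,q}=h^{2n-p,2n-q}$, whence $(-1)^i\chi(X,\Omega_X^i)=(-1)^{2n-i}\chi(X,\Omega_X^{2n-i})$, so the sequence is symmetric about $i=n$. Thus the assertion ``increasing for $0\le i\le n$'' is exactly that $\chi_{-t}(X)$ is a symmetric unimodal polynomial. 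First I would decompose $\chi_{-t}(X)$ via the Hard Lefschetz (primitive) decomposition with respect to a K\"ahler class: a primitive class of Hodge bidegree $(p_0,q_0)$ spans a Lefschetz string of classes of bidegrees $(p_0+k,q_0+k)$, $0\le k\le 2n-p_0-q_0$, all of the same total-degree parity, so it contributes $(-1)^{p_0+q_0}(t^{p_0}+t^{p_0+1}+\cdots+t^{2n-q_0})$. Hence $\chi_{-t}(X)$ is an integer combination of such ``intervals'', each signed by the parity of the degree of the primitive class producing it.

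For part (a) I would first reduce to a surface with $(p,p)$-cohomology. By Hirzebruch--Riemann--Roch each $\chi(S^{[n]},\Omega^i)$ is a Chern number of $S^{[n]}$, so by \cite{EGL} it is a universal $\BQ$-combination $\alpha_i c_1(S)^2+\beta_i c_2(S)$; for a K3 surface this equals $24\beta_i$, while for $\Sigma=\mathrm{Bl}_9\BP^2$ (which also has $c_1(\Sigma)^2=0$ and $c_2(\Sigma)=12$) it equals $12\beta_i$, so $(-1)^i\chi(S^{[n]},\Omega^i)=2\,(-1)^i\chi(\Sigma^{[n]},\Omega^i)$ as in Remark~\ref{remarkpourneufpoints}. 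Since $\Sigma^{[n]}$ has only cohomology of type $(p,p)$ (by \cite{gottsche}), every primitive class has $p_0=q_0$, so all intervals above are positive and centered at $t^n$; a nonnegative combination of symmetric intervals with common center is symmetric and unimodal, which is the claim. Equivalently $(-1)^i\chi(\Sigma^{[n]},\Omega^i)=b_{2i}(\Sigma^{[n]})$, and Hard Lefschetz for the projective manifold $\Sigma^{[n]}$ gives $b_0\le b_2\le\cdots\le b_{2n}$.

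For part (b) this reduction is unavailable: $\Kum_n(A)$ carries odd cohomology and classes with $p_0\ne q_0$, so the interval decomposition now contains negative intervals and intervals not centered at $t^n$, and one must show they do not spoil unimodality. Collecting the interval contributions by their left endpoint, the monotonicity $a_i\ge a_{i-1}$ becomes the comparison $a_j^{\mathrm{prim}}\ge a_{2n+1-j}^{\mathrm{prim}}$ for $1\le j\le n$, where $a_p^{\mathrm{prim}}=\sum_q(-1)^{p+q}h^{p,q}_{\mathrm{prim}}$ is the alternating sum of primitive Hodge numbers in column $p$; that is, the ``low'' primitive columns must dominate the ``high'' ones. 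The hard part will be establishing this inequality: the K\"ahler Hard Lefschetz used so far is blind to it, and the decisive extra input is the second Lefschetz $\mathfrak{sl}_2$ coming from the holomorphic symplectic form $\sigma$ (Verbitsky, Looijenga--Lunts), under which, for each fixed $q$, the numbers $h^{p,q}$ are symmetric and unimodal in $p$ about $p=n$; exploiting the interaction of this symplectic Lefschetz structure with the K\"ahler one on the primitive cohomology is the crux. As a fallback, one can instead compute $\chi_{-t}(\Kum_n(A))$ outright from the G\"ottsche--Soergel generating function for the Hodge numbers (via $E(A^{[n+1]})=E(A)\,E(\Kum_n(A))$) and check unimodality directly.
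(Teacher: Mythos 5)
Your part (a) is essentially the paper's argument: reduce via \cite{EGL} to a surface with $c_1^2=0$ whose Hilbert schemes have purely $(p,p)$ cohomology, identify $(-1)^i\chi(\Omega^i)$ with $b_{2i}$, and invoke hard Lefschetz. (The paper takes $\Sigma$ to be the \emph{disjoint union of two} copies of $\mathbb{P}^2$ blown up in $9$ points so that $c_2=24$ matches exactly; your variant with one copy and an overall factor of $2$ is equally fine, since a positive scalar does not affect monotonicity.)

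For part (b), however, your primary route has a genuine gap. You correctly isolate the needed inequality between the ``low'' and ``high'' primitive columns, but you do not prove it, and the proposed input --- the second $\mathfrak{sl}_2$ of Verbitsky/Looijenga--Lunts --- is not shown to interact with the K\"ahler Lefschetz decomposition in a way that yields it; as written this is a plan, not a proof, and you say so yourself (``the hard part will be establishing this inequality''). What rescues the proposal is that your one-sentence ``fallback'' is exactly the paper's proof: the paper simply quotes the G\"ottsche--Soergel closed formula
\[
\sum_i(-1)^i\chi(\Kum_n(A),\Omega^i)\,y^i \;=\; (n+1)\sum_{d\mid n+1} d^3\,\bigl(1+y+\cdots+y^{(n+1)/d-1}\bigr)^2\, y^{\,n+1-(n+1)/d},
\]
in which every summand is a polynomial with nonnegative coefficients, symmetric and unimodal about $y^{n}$, so the coefficients are increasing on $0\le i\le n$ term by term. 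You should either carry out that (short) verification or actually establish the primitive-column inequality; as submitted, part (b) is not complete.
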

\begin{proof} We argue as in Section \ref{secsmalldim}. As these numbers are Chern numbers by the Hirzebruch-Riemann-Roch formula, we can replace by \cite{EGL} the $K3$ surface $S$ by the disjoint union $\Sigma$ of two copies of $\mathbb{P}^2$ blown-up in $9$ points. Then $(-1)^i\chi(\Sigma^{[n]},\Omega^i_{\Sigma^{[n]}})=b_{2i}(\Sigma^{[n]})$ so the statement is that $b_{2i}(\Sigma^{[n]})$ is increasing in the range $0\leq i\leq n$ and this follows from the hard Lefschetz theorem since ${\rm dim}\,\Sigma^{[n]}=2n$.

  For the second statement, the numbers $(-1)^i\chi({\rm Kum}_n(A),\Omega^i_{{\rm Kum}_n(A)})$  are  computed in \cite{gottschesoergel}  which gives the following formula
$$\sum_i(-1)^i\chi({\rm Kum}_n(A),\Omega^i_{{\rm Kum}_n(A)})y^i = n \sum_{d|n} d^3 (1 + y + ... + y^{n/d-1})^2 y^{n - n/d}, $$
from which it immediately follows that these numbers are increasing in the range $0\leq i\leq n$.  
\end{proof}
We also computed these  numbers for ${\rm OG}6$ and ${\rm OG}10$ and got
$$
 (-1)^i\chi({\rm OG}6,\Omega_{{\rm OG}6}^i)=\,\,4,\, 24,\, 348,\,1168$$  respectively for $i=0,\,1,\,2,\,3$ and 
 $$
 (-1)^i \chi({\rm OG}10,\Omega_{{\rm OG}10}^i)=6, \,111,\, 1062, \,7173,\, 33534, \,93132,
 $$
  respectively for $i=0,\,1,\,2,\,3,\,4,\,5$.
 In the two cases, these numbers are increasing.
 This raises the following question.
 \begin{question}
 Is it true that the numbers $(-1)^i\chi(X,\Omega_X^i)$ are increasing in the range $0\leq i\leq n$ for any hyper-Kähler manifold $X$ of dimension $2n$?
 \end{question}

 \vspace{0.5cm}
 
 (b) {\bf Chern character numbers}. Theorems \ref{thm:hilb case} and \ref{thm:Kum case} prove that the two numbers $(-1)^n\int_{S^{[n]}}{\rm ch}_{2n}(S^{[n]})$  and $(-1)^n\int_{{\rm Kum}_n(A)}{\rm ch}_{2n}({\rm Kum}_n(A))$ are positive for any $n$. 
 
 This suggests the following question.
 \begin{question} Is it true that  $(-1)^nM(X)=(-1)^n\int_X{\rm ch}_{2n}(X)$ is positive for any hyper-Kähler manifold $X$ of dimension $2n$?
  \end{question}
  The following lemma gives an affirmative answer in dimension $4$.
  \begin{lemm}\label{lech4} Let $X$ a a hyper-Kähler fourfold. Then $M(X)=\int_X{\rm ch}_{4}(X)>0$.
  \end{lemm}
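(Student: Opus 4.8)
The plan is to reduce the statement to a numerical bound on the topological Euler characteristic $c_4(X) = \int_X c_4(X)$. First I would record that, since $X$ carries a holomorphic symplectic form, the isomorphism $T_X \cong \Omega_X^1$ forces all odd Chern classes to vanish, so $c_1(X) = c_3(X) = 0$. Expanding the Chern character in terms of Chern classes then gives
$$\ch_4(X) = \frac{1}{24}\left( c_1^4 - 4 c_1^2 c_2 + 4 c_1 c_3 + 2 c_2^2 - 4 c_4 \right) = \frac{1}{12}\left( c_2^2 - 2 c_4 \right),$$
so that $M(X) = \frac{1}{12}\left( \int_X c_2(X)^2 - 2 \int_X c_4(X) \right)$.

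Next I would eliminate the term $\int_X c_2^2$ using Hirzebruch--Riemann--Roch. Since $X$ is hyper-K\"ahler of dimension $4$ we have $\chi(X, \CO_X) = n+1 = 3$, and the degree-$8$ part of the Todd class with $c_1 = c_3 = 0$ is $\td_4 = \frac{1}{720}(3 c_2^2 - c_4)$. Hence $\int_X \td(X) = \frac{1}{720}\left( 3 \int_X c_2^2 - \int_X c_4 \right) = 3$, which yields $\int_X c_2^2 = 720 + \frac{1}{3}\int_X c_4$. Substituting into the formula for $M(X)$ gives the clean expression
$$M(X) = 60 - \frac{5}{36} \int_X c_4(X) = 60 - \frac{5}{36}\, \chi_{\mathrm{top}}(X).$$
In particular $M(X) > 0$ if and only if $\chi_{\mathrm{top}}(X) < 432$, and one checks consistency with Theorems~\ref{thm:hilb case} and \ref{thm:Kum case}: for $S^{[2]}$ one has $\chi_{\mathrm{top}} = 324$ and $M = 15$, while for $\Kum_2(A)$ one has $\chi_{\mathrm{top}} = 108$ and $M = 45$.

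Finally I would invoke Guan's analysis \cite{guan} of the cohomology algebra of hyper-K\"ahler fourfolds, which bounds the Betti numbers (so that $\chi_{\mathrm{top}}(X)$ takes only finitely many values) and in particular gives $\chi_{\mathrm{top}}(X) \leq 324$, the maximum being attained in the $K3^{[2]}$-deformation class. Since $324 < 432$, this gives $M(X) \geq 60 - \frac{5}{36}\cdot 324 = 15 > 0$. The substantive input, and the main obstacle to a self-contained argument, is precisely this topological bound on $\chi_{\mathrm{top}}(X)$; everything else is the formal Riemann--Roch manipulation above. An alternative route avoiding the classification-type bound would be to establish the inequality $\int_X c_2^2 > 2 \int_X c_4$ directly from the positivity of $c_2(X)$ together with the Fujiki relations, but converting that positivity into the sharp numerical inequality does not seem to follow from the elementary estimates used here.
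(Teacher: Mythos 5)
Your proof is correct and follows essentially the same route as the paper: expand $\ch_4$ with $c_1=c_3=0$, use $\chi(X,\CO_X)=3$ via Riemann--Roch to reduce to $\chi_{\rm top}(X)<432$, and then conclude from the topological bound $\chi_{\rm top}(X)\leq 324$ (which the paper derives explicitly by combining Salamon's relation $b_3+b_4=46+10b_2$ with Guan's bound $b_2\leq 23$, rather than citing it as a single packaged result). The consistency checks against Theorems~\ref{thm:hilb case} and \ref{thm:Kum case} are a nice addition but not needed.
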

  \begin{proof} We have ${\rm ch}_{4}(X)=\frac{1}{24}( 2 c_2^2(X) - 4 c_4(X) )$ so the statement is equivalent to $\int_X(c_2^2(X) - 2 c_4(X))>0$.
  Formula (\ref{eqchitop3}) gives us  $\int_Xc_2(X)^2=720+\frac{1}{3}\int_Xc_4(X)$, so the desired inequality is equivalent to 
  \begin{eqnarray}\label{eqinec4} \int_Xc_4(X)=\chi_{\rm top}(X) <\frac{9\cdot 240}{5}=432.
  \end{eqnarray}
  Inequality (\ref{eqinec4}) now follows from work of Salamon \cite{salamon} and Guan \cite{guan}. By \cite{salamon},
  $b_3(X)+b_4(X)=46+10 b_2(X)$, 
  hence $\chi_{\rm top}(X)=b_4(X)-2b_3(X)+2b_2(X)+2\leq 48+12 b_2(X)$. Guan proves that $b_2(X)\leq 23$, so we get 
  $$ \chi_{\rm top}(X)\leq 48+12\cdot 23=324,$$
  proving (\ref{eqinec4}).
  \end{proof}
  
 Proposition \ref{prop:positivity} shows that  $(-1)^n \int_{{\rm Kum}_{n}(A)} {\rm ch}_{2k_1} .... {\rm ch}_{2k_r}>0$ for any choice of partition $n=\sum_i k_i$. This suggests the following question
 
 \begin{question} \label{questioncherncharacternumber} Is it true that  $(-1)^n\int_X{\rm ch}_{2k_1} .... {\rm ch}_{2k_r}$ is positive for any hyper-Kähler manifold $X$ of dimension $2n$ and  any choice of partition $n=\sum_i k_i$?
  \end{question}
 
 \vspace{0.5cm}
 
 (c)  {\bf Positivity of Chern numbers}. We recall here for completeness  that   positivity properties  had been observed already in \cite{nieperthese}, \cite{sawon} for the monomial Chern numbers $\int_Xc_{2k_1}(X)\ldots c_{2k_r}(X)$ of known hyper-Kähler manifolds. The following question was asked in 
 \cite{nieperthese}
 \begin{question}\label{qusetionchernnumber}
 Is it true that  $(-1)^n\int_Xc_{2k_1}(X) .... c_{2k_r}(X)$ is positive for any hyper-Kähler manifold $X$ of dimension $2n$ and  any choice of partition $n=\sum_i k_i$?
 \end{question}
 We note that, in the case of dimension $4$, it is still unknown that $\chi_{\rm top}(X)=\int_Xc_4(X)>0$. The questions (b) and (c) look very similar but they lead to very different convexity inequalities and,  in dimension $4$, the two inequalities $\int_Xc_4(X)>0$ (conjectured above) and  $\int_X{\rm ch}_4(X)>0$ proved in Lemma \ref{lech4} imply together the finiteness of the complex cobordism classes of hyper-Kähler fourfolds.
 
 \vspace{0.5cm}
 
We finish with two  questions more specifically related to our results, concerning the comparison of the two systems of linear generators $S^{[I]}$ and ${\rm Kum}_I(A)$. It would be interesting to know more about the matrix comparing these two systems of linear generators  in each dimension. 
\begin{question}\label{questionunpeuvague} Is there a geometric way of understanding and  computing this matrix?
\end{question}
Another intriguing fact concerns the shape of the coefficients of these matrices. 
Since the Chern numbers of $S^{[k]}$ and $\Kum_{k}(A)$ are known for small values of $k$, and the Chern numbers of a product $X\times Y$ can be expressed in terms of Chern numbers of $X$ and $Y$, one gets consequently the Chern numbers of $S^{[I]}$ and $\Kum_I(A)$ for all partitions $I$ of $k$. One may then study the linear relations among the classes of these manifolds. Below is the explicit expression giving the class of 
$S^{[k]}$  as a $\mathbb{Q}$-linear combination of the classes of ${\rm Kum}_I(A)$ for $k\leq 5$. 
\begin{equation}
\label{eqexpressionSenA}
\begin{aligned}  
S^{[2]} & = 1/3 \Kum_{2}(A) + 1/2 \Kum_{1,1}(A)\\  
S^{[3]} & = 1/5 \Kum_3(A) + 14/45 \Kum_{2,1}(A) + 1/6 \Kum_{1,1,1}(A)\\  
S^{[4]} & = 1/7 \Kum_{4}(A) + 7/40 \Kum_{3,1}(A) + 1/21 \Kum_{2,2}(A)\\  & + 47/315 \Kum_{2,1,1}(A) + 1/24 \Kum_{1,1,1,1}(A)\\
S^{[5]} & = 1/9 \Kum_{5}(A) + 62/525 \Kum_{4,1}(A) + 4/75 \Kum_{3,2}(A) + 49/600 \Kum_{3,1,1}(A)\\ 
& + 23/525 \Kum_{2,2,1}(A) + 151/3150 \Kum_{2,1,1,1}(A) + 1/120 \Kum_{1,1,1,1,1}(A).
\end{aligned}
\end{equation}
The leading coefficient being $\frac1{2k-1}$ can be explained by the difference in the expression of Milnor genus for the two infinite series, since the other terms are products and do not contribute to the Milnor genus.

Similarly, we computed the class of 
${\rm Kum}_k(A)$  as a $\mathbb{Q}$-linear combination of the classes of $S^{[I]}$ for $k\leq 5$.
\begin{equation}
\label{eqexpressionAenS} 
\begin{aligned}
{\rm Kum}_2(A) & = 3 S^{[2]} - 3/2 S^{[1,1]}\\
{\rm Kum}_3(A) & = 5 S^{[3]} - 14/3 S^{[2,1]} + 3/2 S^{[1,1,1]}\\
{\rm Kum}_4(A) & = 7 S^{[4]} - 49/8 S^{[3,1]} - 3 S^{[2,2]} + 67/12 S^{[2,1,1]} - 21/16 S^{[1,1,1,1]}\\
{\rm Kum}_5(A) & = 9 S^{[5]} - 186/25 S^{[4,1]} - 36/5 S^{[3,2]} + 1287/200 S^{[3,1,1]} \\
& +159/25 S^{[2,2,1]} - 577/100 S^{[2,1,1,1]} + 423/400 S^{[1,1,1,1,1]}.
\end{aligned}
\end{equation}
Equations (\ref{eqexpressionSenA}) strongly suggest the following question.
\begin{question}\label{questionconvexity} Is it true that for any $n$, the class of $S^{[n]}$ is a linear combination with positive coefficients of the classes of ${\rm Kum}_I(A)$?
\end{question}
There are only two known hyper-Kähler manifolds which do not belong to the two infinite series discussed above, namely the $6$-dimensional and $10$-dimensional O'Grady manifolds ${\rm OG}_6$ and ${\rm OG}_{10}$  (see \cite{ogrady}, \cite{og10}).
Their cobordism  classes are expressed as follows in the generalized Kummer basis (showing in particular that not any hyper-Kähler manifold has its class in the convex cone generated by products of generalized Kummer varieties).
\begin{align*}
{\rm OG}_6  & = 6/5 {\rm Kum}_{3}(A) - 16/45 {\rm Kum}_{2,1}(A) + 1/6 {\rm Kum}_{1,1,1}(A),\\ \nonumber
{\rm OG}_{10} & = 25/168 {\rm Kum}_5(A) + 67/700 {\rm Kum}_{4,1}(A) + 3/700 {\rm Kum}_{3,2}(A) + 163/1600 {\rm Kum}_{3,1,1}(A) \\
& \nonumber+ 2617/37800 {\rm Kum}_{2,2,1}(A) + 493/12600 {\rm Kum}_{2,1,1,1}(A) + 17/1920 {\rm Kum}_{1,1,1,1,1}(A).
\end{align*}

Our last observation is the following. There is a mysterious link (in fact related to mirror symmetry)  between hyper-K\"{a}hler manifolds of dimension $2n$ and rational homology projective space $\mathbb{C}\mathbb{P}^n$. It  appears for example  in \cite{KLSV} where it is proved that the dual complex of the central fiber  of a maximally unipotent dlt degeneration of a hyper-K\"{a}hler $2n$-fold is a rational homology projective space $\mathbb{C}\mathbb{P}^n$. There is another mysterious and more precise  link between ${\rm K3}^{[n]}$ and  projective space $\mathbb{P}^n$, which comes from the study of the Riemann-Roch polynomials. Indeed, one has the following result that can be formulated using the Chern numbers of $X$ by \cite{Nieper}. (This result is used by looking at the natural  Lagrangian fibration of a variety $S^{[n]}$ where $S$ is a $K3$ surface  equipped with an elliptic fibration.)
\begin{theo} \cite{EGL} Let $X$ be a hyper-K\"{a}hler manifold of $K3^{[n]}$-deformation type and $q$ be its Beauville-Bogomolov form. Then for any line bundle $L$ on $X$ with $q(c_1(L))=2k$, one has $\chi(X,L)=\chi(\mathbb{P}^n,\mathcal{O}_{\mathbb{P}^n}(k+1))=h^0(\mathbb{P}^n,\mathcal{O}_{\mathbb{P}^n}(k+1))$.
\end{theo}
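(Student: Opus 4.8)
The plan is to reduce everything to a single model computation by exploiting two classical structural facts: that $\chi(X,L)$ depends only on the Beauville--Bogomolov square $q(c_1(L))$, and that the resulting Riemann--Roch polynomial is a deformation invariant. First I would apply Hirzebruch--Riemann--Roch to write
\[
\chi(X,L) = \int_X \ch(L)\,\td(X) = \sum_{j=0}^{2n} \frac{1}{j!} \int_X c_1(L)^{j}\, \td_{2n-j}(X),
\]
where $\td_{2n-j}(X) \in H^{2(2n-j)}(X,\BQ)$ denotes the graded piece of the Todd class in complex degree $2n-j$. Each such piece is a universal polynomial in the Chern classes of $X$, hence is invariant under the monodromy group. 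I would then invoke the generalized Fujiki relation: for a monodromy-invariant class $\beta \in H^{2(2n-j)}(X,\BQ)$ the function $\alpha \mapsto \int_X \alpha^{j}\beta$ is a monodromy-invariant homogeneous polynomial of degree $j$ on $H^2(X,\BQ)$, and since for the $K3^{[n]}$-deformation type the monodromy invariants in $\mathrm{Sym}^{\bullet} H^2$ form the ring $\BQ[q]$, this integral vanishes for $j$ odd and equals $c_j\, q(\alpha)^{j/2}$ for $j$ even. Thus $\chi(X,L) = R_X(q(c_1(L)))$ for a polynomial $R_X$ of degree at most $n$ whose coefficients are assembled from characteristic numbers and the Fujiki constant of $X$; as these are all deformation invariants, $R_X$ depends only on the deformation type.

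Next I would pin down this polynomial on the model $X = S^{[n]}$. Choose a $K3$ surface $S$ carrying a big and nef line bundle $L_S$ with $L_S^2 = 2k$ and $h^1(S,L_S)=h^2(S,L_S)=0$, so that $h^0(S,L_S) = \chi(S,L_S) = k+2$. Under the embedding $H^2(S,\BZ) \hookrightarrow H^2(S^{[n]},\BZ)$, on which $q$ restricts to the intersection form, the class $c_1(L_S)$ produces a line bundle $D$ on $S^{[n]}$ with $q(c_1(D)) = L_S^2 = 2k$, and $D = \rho^{\ast}\bar D$ is pulled back through the Hilbert--Chow morphism $\rho\colon S^{[n]} \to \mathrm{Sym}^n S$ from the line bundle $\bar D$ descending $L_S^{\boxtimes n}$. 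Since $\rho$ is a crepant resolution with $R\rho_{\ast}\CO_{S^{[n]}} = \CO_{\mathrm{Sym}^n S}$, the projection formula yields
\[
\chi(S^{[n]}, D) = \chi(\mathrm{Sym}^n S, \bar D) = \dim \mathrm{Sym}^n H^0(S,L_S) = \binom{h^0(S,L_S)+n-1}{n} = \binom{n+k+1}{n}.
\]

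Finally, letting $k$ range over the infinitely many values realizable by such pairs $(S,L_S)$ (any $K3$ of Picard rank at least $2$ containing a hyperbolic plane supplies them), I obtain $R_{S^{[n]}}(2k) = \binom{n+k+1}{n} = \chi(\mathbb{P}^n, \CO_{\mathbb{P}^n}(k+1))$ at more than $n+1$ points. As both sides are polynomials of degree $n$ in $k$, they must agree identically, and by the deformation invariance established in the first step $R_X = R_{S^{[n]}}$ for every $X$ of $K3^{[n]}$-type. This gives $\chi(X,L) = \binom{n+k+1}{n} = \chi(\mathbb{P}^n,\CO_{\mathbb{P}^n}(k+1))$ whenever $q(c_1(L)) = 2k$, and the final equality with $h^0$ is immediate since $\CO_{\mathbb{P}^n}(k+1)$ has no higher cohomology for $k \geq -1$. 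The hard part is the first step: showing that the characteristic integrals factor through $q$. This rests on the generalized Fujiki relation together with the description of the monodromy invariants in $\mathrm{Sym}^{\bullet}H^2$, i.e. on the big-monodromy theory for $K3^{[n]}$-type manifolds; alternatively one can substitute Fujiki's original twistor-deformation argument. Once this structural input is in place, the model computation is routine.
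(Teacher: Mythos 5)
The paper offers no proof of this statement: it is quoted directly from \cite{EGL}, so there is no internal argument to compare against. Your reconstruction is correct and follows the standard route: (i) Hirzebruch--Riemann--Roch plus the generalized Fujiki relation show that $\chi(X,L)=R(q(c_1(L)))$ for a polynomial $R$ of degree at most $n$ whose coefficients are deformation invariants; (ii) the model computation on $S^{[n]}$ via the Hilbert--Chow morphism gives $\chi(S^{[n]},D)=\dim \mathrm{Sym}^n H^0(S,L_S)=\binom{n+k+1}{n}$ for a polarized $K3$ of degree $2k$; (iii) interpolation at infinitely many values of $k$ identifies $R$ with the Hilbert polynomial of $\mathbb{P}^n$. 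Two points of comparison with \cite{EGL} are worth making. First, step (ii) is obtained there from the universality machinery for tautological integrals, which yields the generating series $\sum_n \chi(S^{[n]},L_n)z^n=(1-z)^{-\chi(S,L)}$; your crepant-resolution argument (using $R\rho_{\ast}\mathcal{O}_{S^{[n]}}=\mathcal{O}_{\mathrm{Sym}^n S}$ together with the K\"unneth and invariant-theory computation on $\mathrm{Sym}^n S$, and Kodaira vanishing on the polarized $K3$) is a legitimate and more self-contained substitute. Second, in step (i) your appeal to the monodromy invariants of $\mathrm{Sym}^{\bullet}H^2$ being $\mathbb{Q}[q]$ rests on Markman's theorem that the monodromy group has finite index in $O(q)$, a much later and heavier input; the alternative you mention --- Fujiki's local Torelli / type-decomposition argument applied to the Todd classes, which remain of type $(p,p)$ in all deformations --- is the route actually available to \cite{EGL} and should be preferred. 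One small caveat on the last line: the equality $\chi(\mathbb{P}^n,\mathcal{O}_{\mathbb{P}^n}(k+1))=h^0(\mathbb{P}^n,\mathcal{O}_{\mathbb{P}^n}(k+1))$ holds for $k\geq -n-1$ but fails for $k\leq -n-2$, and line bundles of negative Beauville--Bogomolov square do occur; this is an imprecision of the statement as quoted rather than of your argument, but it deserves a remark.
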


The formalism used in the present paper proposes a further analogy between  ${\rm K3}^{[n]}$ and $\mathbb{P}^n$. Namely the classical complex cobordism  gives the  projective spaces $\mathbb{P}^n$ as multiplicative rational  generators of ${\rm MU}^*({\rm pt})$ while we proved that the   ${\rm K3}^{[n]}$ are multiplicative rational   generators of ${\rm MU}^*({\rm pt})_{\rm even}$.

Mathemathisches Institut, Universit\"at Bonn

georgo@math.uni-bonn.de \\

Université de Paris, CNRS, IMJ-PRG

jieao.song@imj-prg.fr \\

CNRS, Institut de Math\'{e}matiques de Jussieu-Paris rive gauche

claire.voisin@imj-prg.fr
    \end{document}